\def\blfootnote{\gdef\@thefnmark{}\@footnotetext}
\theoremstyle{definition}
\newtheorem{thm}{Theorem}[section]
\newtheorem{lem}[thm]{Lemma}
\newtheorem{rem}[thm]{Remark}
\newtheorem{defi}[thm]{Definition}
\newtheorem{prop}[thm]{Proposition}
\newtheorem{question}{Question}
\numberwithin{equation}{section}
\providecommand{\keywords}[1]
{
  \textbf{\text{Keywords: }} #1
}
\newcommand{\enabstractname}{Abstract}
\newenvironment{enabstract}{
    \par\small
    \noindent\mbox{}\hfill{\bfseries \enabstractname}\hfill\mbox{}\par
    \vskip 2.5ex}{\par\vskip 2.5ex} 
\title{On Geodesic Rays of Newtonian Gravitational Systems}
\author[]{Putian Yang}
\author[]{Shiqing Zhang\thanks{Corresponding author: zhangshiqing@scu.edu.cn}}
\affil[]{School of Mathematics, Sichuan University}
\date{}
\begin{document}
\maketitle

\blfootnote{\textup{2010} \textit{Mathematics Subject Classification}: 34B15, 34C25, 70F10, 70F16, 70G75}

\begin{enabstract}
 In this paper, we focus on the set of geodesics rays of the Newtonian $N$-body problem.
 We find that the limits of geodesic rays are also geodesic rays, hence they are not dense in the space of initial conditions. As a result, 
 there are many motions whose domain is the half real line has non-negative total energy, and they are not geodesic rays, 
 the set of such motions has positive measure,
 we think this set gives a large space to accommodate many solutions with ``bad" properties.
 As an application of weak KAM theory for $N$-body problem, we give a brief proof of the existence of complete parabolic orbit starting from
  any given initial position. 
\end{enabstract}
\keywords{Newtonian $N$-body problem, Parabolic motions, Superhyperbolic motions, \\Geodesic rays}

\tableofcontents
\newpage

\section{Introduction}\label{Introduction}
In 1922, Chazy \cite{ASENS_1922_3_39__29_0} published a long famous paper on the three body problem,
he claimed that there are seven kinds of motions based on their final evolutions, among which the parabolic and hyperbolic types attract tremendous interests,
since these types and mixed ones 
have  interesting physical features.

In this paper we suppose that there are $N$ punctual bodies with masses $m_1,\dots, m_N>0$. 
They move in the given Euclidean space 
$E=\mathbb{R}^d, d\geq 2$, 
with $(\cdot,\cdot) $ the standard inner product of $E$ and $\vert\cdot\vert$ is its induced norm. We call $E^N$  the configuration space of the $N$-body problem,
it is a linear space with 
mass scalar product which is denoted by $g_m$ and is defined as
\begin{equation}
\langle x,y\rangle=\sum_{i=1}^{N}m_{i}(x_{i},y_{i})
\end{equation}
for $x=(x_{1},\ldots,x_{N}), y=(y_{1},\ldots,y_{N})\in E^{N}$.
The moment of inertia of $x$ is $I(x)=\langle x,x\rangle$ and the norm of $x$ is $\|x\|=I(x)^{1/2}.$

For $N$ bodies with positions $x_1(t),\dots, x_N(t)$,  the parabolic orbits, hyperbolic orbits and their mixed types have the definitions
 simlilar to that in  \cite{ASENS_1922_3_39__29_0}.

 \begin{defi}
    For $x(t) = (x_1(t),\dots, x_N(t))\in E^N$, as $t\rightarrow +\infty$, the $N$ bodies system is said to be 
\begin{itemize}
    \item   completely parabolic or simply parabolic, if each body $x_i$ goes to infinity and $\dot{x}_i\rightarrow 0\in E$;
    \item   hyperbolic, if velocity of each body $\dot{x}_i\rightarrow a_i\in E, a_i\neq a_j$, $i,j = 1,\dots, N$;
    \item   hyperbolic-parabolic or partially hyperbolic, if velocity of each body $\dot{x}_i\rightarrow a_i\in E$, there are some  $a_i$ that are equal, and not all $a_i$ be zero.
\end{itemize}
\end{defi}

We say that the motion of a $N$-body system is called expansive if all mutual distances $r_{ij}(t) = \vert x_i(t) - x_j(t)\vert \rightarrow +\infty$ as $t\rightarrow+\infty$. 
On the other hand, a normalized configuration $a\in E^N$ is called the limit shape of a solution $x(t)$, if there are constants $\alpha,\kappa>0$  such that
\begin{equation*}
   \lim_{t\rightarrow+\infty}\frac{x(t)}{t^\alpha} = \kappa a.
\end{equation*}

Denote by $\Omega$  the subset of configurations without collision, i.e., 
\begin{equation*}
    \Omega=\left\{x\in E^{N}\mid x_{i}\neq x_{j},  \forall i,j \text{ s.t. } 1\leq i<j\leq N\right\},
\end{equation*}
 we also denote by $\Delta=E^{N}\setminus\Omega$ the collision set  and by $TE^N\cong E^N\times E^N$ the space of initial conditions, 
    in particular an initial position being in $\Delta$ means that its motion is an ejection emerging from collision.

We equipe $\Omega$ with the Jacobi-Maupertuis metric, then $\Omega$ is an incomplete Riemannian manifold, hence we can consider geodesic rays starting from 
given points in $\Omega$. 
Apparently any expansive motion has non-negative total energy and Chazy also proved that for the 
three body problem, all the three cases are are expansive.
Burgos and Maderna
proved in  \cite{Burgos2022} that  geodesic rays of $N$-body problem all belong to the above three types, and
all geodesic rays are expansive, we present this result in Theorem \ref{free time minmizer with h geq 0, all r_ij tend to infinity}.
We mainly discuss geodesic rays in this paper. 

Chazy \cite{ASENS_1922_3_39__29_0} had verified that the hyperbolic motions are stable in three-body problem, in other words,  
the set of initial conditions leading to hyperbolic motions is open in $T\Omega$,
i.e., a solution that is near a hyperbolic soluton is also hyperbolic. 
Besides Chazy's great work, during the last century, there are some other early literatures on the initial conditons of expansive motions,
for example, 
Saari \cite{SAARI1984300}, Saari and Hulkower \cite{SAARI198127} investigated the manifold structures of some sets of initial conditions of expanding orbits and their connection to collisions.
Another example is from Saari \cite{10.2307/1995609}, he found that when the total energy is positive, 
the growth order of the minimal distance between particles is either $t^{2/3}$ or $t$.
In this paper, we prove that if both the dimension $d$ of the space $E$ and the number of the bodies $N$ are greater than $2$, 
the set of all geodesic rays of $N$-body problems is closed in the topology of $TE^N$. Through out this paper we denote by $GR$ the set of initial conditions leading to geodesic rays.
We apply the methods from Maderna and Venturelli \cite{10.4007/annals.2020.192.2.5} 
to get our main result as the following:
\begin{thm}\label{The set of geodesic rays is closed}
    In the Newtonian gravitational $N$-body system, the set of initial conditions giving rise to geodesic rays is closed.
\end{thm}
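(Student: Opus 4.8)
The plan is to recast the notion of a geodesic ray in the language of free time minimizers (calibrated curves) for the Lagrangian action, and then to show that the calibration property survives passage to the limit. Recall that under the Jacobi--Maupertuis correspondence a geodesic ray issuing from $(x,v)\in T\Omega$ is a Newtonian solution $\gamma\colon[0,+\infty)\to\Omega$ of energy $h=\tfrac12\|v\|^2-U(x)\ge 0$ (where $U$ denotes the Newtonian potential) each of whose finite sub-arcs minimizes the free time action; that is, writing $\phi_h$ for the critical action potential of energy $h$, one has $A_h(\gamma|_{[a,b]})=\phi_h(\gamma(a),\gamma(b))$ for all $0\le a<b$. Thus proving that $GR$ is closed amounts to proving that this calibration identity is stable under convergence of initial conditions.

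First I would take a sequence $(x_n,v_n)\to(x_\infty,v_\infty)$ in $TE^N$ with every $(x_n,v_n)\in GR$, and let $\gamma_n$ be the corresponding geodesic ray, of energy $h_n\ge 0$. The first, and pleasantly elementary, point is that the limiting position cannot be a collision: since $U(x_n)\le\tfrac12\|v_n\|^2$ stays bounded as $v_n\to v_\infty$, while $U\to+\infty$ on approach to $\Delta$, we must have $x_\infty\in\Omega$, and then $h_\infty=\tfrac12\|v_\infty\|^2-U(x_\infty)=\lim_n h_n\ge 0$. Continuous dependence of the Newtonian flow on initial data away from $\Delta$ then gives $\gamma_n\to\gamma_\infty$ in the $C^1$ topology, uniformly on every compact subinterval of the maximal interval $[0,T)$ of the solution $\gamma_\infty$ through $(x_\infty,v_\infty)$.

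Next I would pass the calibration to the limit. Fix $0\le a<b<T$; on $[a,b]$ the curves $\gamma_n$ converge to $\gamma_\infty$ together with their velocities and stay in a compact part of $\Omega$, so lower semicontinuity of the fixed-endpoint action combined with the joint continuity of $\phi_h(x,y)$ in the triple $(h,x,y)$ for $h\ge 0$ yields
\[
A_{h_\infty}(\gamma_\infty|_{[a,b]})\ \le\ \liminf_n A_{h_n}(\gamma_n|_{[a,b]})\ =\ \lim_n \phi_{h_n}(\gamma_n(a),\gamma_n(b))\ =\ \phi_{h_\infty}(\gamma_\infty(a),\gamma_\infty(b)).
\]
The reverse inequality is the definition of $\phi_{h_\infty}$ as an infimum, so equality holds and every sub-arc of $\gamma_\infty$ is calibrated: $\gamma_\infty$ is a free time minimizer of energy $h_\infty\ge 0$ on $[0,T)$.

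It remains to show that $T=+\infty$, and this I expect to be the main obstacle. A solution of non-negative energy can fail to extend only at a singularity in finite time, which for the $N$-body problem means $\gamma_\infty(t)$ approaches $\Delta$ as $t\to T^-$. Marchal's theorem and its free time version forbid interior collisions of calibrated arcs, but this alone does not preclude a collision arising only in the limit $t\to T^-$; excluding that is the delicate step. Here I would exploit that the approximating rays $\gamma_n$ are expansive (Theorem \ref{free time minmizer with h geq 0, all r_ij tend to infinity}) and carry collision-avoidance estimates that are uniform for data in a bounded set and energies bounded below by $0$, so that the minimal mutual distance along $\gamma_n$ admits a positive lower bound on each $[0,b]$ independent of $n$; passing to the limit keeps $\gamma_\infty$ bounded away from $\Delta$ on every $[0,b]$, forcing $T=+\infty$. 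Combined with the calibration established above, this shows $\gamma_\infty$ is a geodesic ray and hence $(x_\infty,v_\infty)\in GR$, completing the argument. The hypotheses $d,N>2$ and the fine asymptotic theory, namely limit shapes, the continuity of $\phi_h$, and Marchal's lemma, are precisely what make the uniform no-collision control and the limit argument go through.
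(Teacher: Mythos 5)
Your reduction of the statement to free time minimizers and your passage of the calibration identity $A_{h_n}\left(\gamma_n\big\vert_{[a,b]}\right)=\phi_{h_n}(\gamma_n(a),\gamma_n(b))$ to the limit is sound, and it parallels the first half of the paper's proof: the paper packages the same content through the Busemann functions $u_n(x)=\phi_{h_n}(0,p_n)-\phi_{h_n}(x,p_n)$, extracted by Ascoli's theorem from the uniform modulus $\mu$ of Theorem \ref{uniform bound for phi_hn}, but the conclusion --- that the limit solution is a free time minimizer of $A_{h}$ on its maximal interval $[0,t^*)$ --- is the same. (Your preliminary observation that $h_n\geq 0$ forces the limit position away from $\Delta$ is a nice touch, and your use of $\liminf$ plus lower semicontinuity is fine, since under local $C^1$ convergence the actions in fact converge.) You also correctly identify $t^*=+\infty$ as the crux. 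But precisely there your argument has a genuine gap: you invoke ``collision-avoidance estimates that are uniform for data in a bounded set and energies bounded below by $0$,'' giving a positive lower bound on the minimal mutual distance along $\gamma_n$ on each $[0,b]$ independent of $n$. No such uniform estimate appears in the paper or in the literature it uses, and none follows from what you cite: Theorem \ref{free time minmizer with h geq 0, all r_ij tend to infinity} gives $r_{ij}\thickapprox t^{2/3}$ or $r_{ij}\thickapprox t$ for each fixed ray, with implied constants that may degenerate as $n\rightarrow\infty$, and it controls nothing on bounded time intervals; Marchal's theorem excludes interior collisions of each $\gamma_n$ but is purely qualitative. Worse, the uniform bound near a hypothetical singular time $t^*$ is essentially equivalent to the conclusion being proved (a posteriori it follows from the closedness of $GR$ via local uniform convergence to the limiting ray), so as written the step is circular.

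The paper closes this gap by a different mechanism, and this is the real point of its proof: the limit Busemann function $u=\lim_n u_n$, with $p_n=\gamma_n(t_n)$ pushed to infinity via Lemma \ref{p_n goes to infty}, is not bookkeeping --- it is the extension device. Having shown that $\gamma$ $h$-calibrates $u$ on $[0,t^*)$, if $t^*<+\infty$ one takes $t'<t^*$ and applies Proposition \ref{Maderna: exist calibrating curve on [0,+infty)} (whose own proof runs through von Zeipel's Theorem \ref{von Zeipel} and the growth bound of Theorem \ref{uniform bound for phi_hn}) to obtain a calibrating curve $\rho$ of the \emph{same} $u$ defined on all of $[0,+\infty)$ with $\rho(0)=\gamma(t')$; the concatenation of $\gamma\big\vert_{[0,t']}$ with $\rho$ still calibrates $u$, hence is a global free time minimizer by Lemma \ref{calibrating curve is free time minimizer}, hence by Theorem \ref{Marchal: minimizer has no collison} is a genuine collision-free solution with $\dot{\gamma}(t')=\dot{\rho}(0)$, extending $\gamma$ past $t^*$ --- a contradiction. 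To repair your proof you must either establish your uniform collision-avoidance bound (which is not available) or adopt this calibration-extension argument, and for the latter you need exactly the Busemann-function apparatus your sketch set aside, since Proposition \ref{Maderna: exist calibrating curve on [0,+infty)} applies to functions of that specific form. A minor further remark: the joint continuity of $(h,x,y)\mapsto\phi_h(x,y)$ that you use is asserted without proof; it can be justified from $\phi_h(x,y)=\inf_{t>0}\left\{\phi(x,y,t)+ht\right\}$ (monotonicity and concavity in $h$) together with the modulus $\mu$, and the paper is itself comparably brief on this point, so I do not count it as a gap.
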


The existence of expansive motions were unclear in 20th century, 
for instance, whether there will be a parabolic eject from a given initial configuration or to a given limit configuration.
It is rather difficult for the calculus method to solve these existence problems.

Nevertheless, in recent years, weak KAM theory and some related topological technics have attracted considerable attentions in the community of celestial mechanics,
and by using these tools there have been extensive outstanding studies in the existence of Newtonian expansive motions, 
among them we must emphasize here several researches done by excellent mathematicians in the field of dynamical systems, and we summarize them 
 as
Theorem \ref{exist of geodesic motions}. 

In 2009, Maderna and Venturelli \cite{Maderna2009} discovered that given a central configuration $x_1\in E^N$ that 
absolutely minimizes the normalized potential $\tilde{U} = I^{1/2}U$, 
and
given any $x_0$ as a initial point in $E^N$, there exist a motion $x(t)$ 
such that $x(0) = x_0$ and $x_1$ is its limit shape in the sense that
\begin{equation*}
    x(t) = \lambda x_1 t^{2/3} + o(t^{2/3}) 
\end{equation*}
 for some constant $\lambda>0$.

 In 2013, Luz and Maderna \cite{daluz_maderna_2014} gives another beautiful proof of the existence of parabolic motions.
 They found that the parabolic motions can emerge from any initial point in $\Omega$, and the motion is asymptotic to the $set$ of central configuration,
 in other words, for an arbitrary given position $x_0 \in \Omega$, there are velocities of the $N$ particles 
 such that the motion starting from the chosen initial conditions is fully parabolic.
 
 For the case that the parabolic motions starting from the collision set and asymptotic to a given central 
 configuration, see \cite{Percino2014}. Based on \cite{Burgos2022},
 we in our paper, give another brief proof of the existence of parabolic motions.
 
 In 2020, Maderna and Venturelli published another extrordinary paper \cite{10.4007/annals.2020.192.2.5}, he proved the existence of hyperbolic motions
 that given any initial configuration $x_0\in E^N$, any normalized configuration without collisons
 $x_1\in\Omega$, and any total energy $h>0$, there exist a hyperbolic motion $x(t)= \sqrt{2h}x_1t +o(t)$ starting from
 $x_0$ and has  $x_1$ as its limit shape.

 Recently, Burgos \cite{burgos2022existence} proved the existence of a partially hyperbolic motion. 
 That is, in an Euclidean space $E=\mathbb{R}^d$ with dimension $d\geq 2$,
 for any given energy level $h>0$, any configuration $x_0\in\Omega$, there exists a partially hyperbolic motion $x(t) = at + o(t)$ 
 such that $a\in\Delta-\{0\}$. Burgos's arguement relies on an important work of Chazy in  \cite{ASENS_1922_3_39__29_0}, that the limit shape of hyperbolic motion depends contiunously on the initial condition. 
 There is a natural question.
 
 \begin{question}
     Given an arbitrary $a\in\Delta\setminus\{0\}$, is there a motion that has $a$ to be its limit shape?
 \end{question}

 For brevity, we summarize here some of their conclusions as the following theorem, note that if $x_0\in\Delta$,
 it means that the motion is an ejection from collision and $x$ is continous at $t=0$, and $x(t)\in\Omega$ is a maximal 
solution for $t>0$.
\begin{thm}\label{exist of geodesic motions}
For the Newtonian $N$-body problem, given any intial configuration $x_0\in E^N$ and any energy constant $h\geq 0$,
there are choices of initial velocities that determine the motion to be a geodesic rays, they have the form
\begin{equation}
   x(t)=at+O(t^{2/3})\text{ as }t\rightarrow+\infty,
\end{equation}
and $x(t)$ is hyperbolic if $a\in\Omega$,
parabolic  if $a=0$,
 partially hyperbolic  if $a\in\Delta\setminus\{0\}$.

If $h=0$, it is completely parabolic, if $h> 0$, it is hyperbolic or partially hyperbolic.
\end{thm}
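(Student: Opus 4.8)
The plan is to treat all three Chazy types uniformly through the variational characterization of geodesic rays as free time minimizers of the Jacobi--Maupertuis action, thereby reducing the statement to a single existence result followed by the asymptotic analysis already encoded in Theorem \ref{free time minmizer with h geq 0, all r_ij tend to infinity}. Recall that a geodesic ray of the metric $j_h$ on $\Omega$ is exactly a curve $\gamma:[0,+\infty)\to\overline{\Omega}$ every compact restriction of which minimizes the fixed--energy action
\begin{equation*}
A_h(\gamma)=\int_0^\tau\Big(\tfrac12\|\dot\gamma\|^2+U(\gamma)+h\Big)\,dt
\end{equation*}
among curves with the same endpoints and free travel time $\tau$; by Cauchy--Schwarz the $A_h$--minimizers are precisely the $j_h$--geodesics traversed with energy $h$. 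Hence it suffices to produce, for each $x_0\in E^N$ and each $h\ge 0$, one such minimizer issuing from $x_0$, and then to read off its asymptotics.

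First I would establish existence by the Busemann--function (weak KAM) construction used in \cite{10.4007/annals.2020.192.2.5}. Fix a sequence of points $y_n\in\Omega$ with $\|y_n\|\to+\infty$ and let $\gamma_n:[0,\tau_n]\to\overline{\Omega}$ be fixed--endpoint $A_h$--minimizers from $x_0$ to $y_n$; existence of each $\gamma_n$ follows from the direct method together with Marchal's theorem, which guarantees that an action minimizer has no interior collisions. The key step is a priori compactness: after reparametrizing by $j_h$--arc length and using coercivity of the action and the collision--free interior, the family $\{\gamma_n\}$ is locally equicontinuous, so a subsequence converges in $C^0_{loc}$ to a curve $\gamma$ which is again a free time minimizer and stays in $\Omega$ for $t>0$. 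This $\gamma$ is the desired geodesic ray; when $x_0\in\Delta$ it is an ejection, continuous at $t=0$ and collision--free afterwards.

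Next I would invoke Theorem \ref{free time minmizer with h geq 0, all r_ij tend to infinity}: since $h\ge 0$, the ray $\gamma$ is expansive, all $r_{ij}(t)\to+\infty$, and it belongs to one of the three types. Expansiveness makes $U(\gamma(t))\to 0$, so the conserved relation $\tfrac12\|\dot\gamma\|^2-U(\gamma)=h$ forces $\dot\gamma(t)\to a$ with $\tfrac12\|a\|^2=h$; this yields at once the dichotomy $h=0\Leftrightarrow a=0$ (parabolic) and $h>0\Leftrightarrow a\neq 0$, the latter being hyperbolic when $a\in\Omega$ and partially hyperbolic when $a\in\Delta\setminus\{0\}$. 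For the refined form $x(t)=at+O(t^{2/3})$ I would integrate $\ddot\gamma=\nabla U(\gamma)$ twice: along an expansive ray $\|\nabla U\|=O(\|\gamma\|^{-2})$, which for $a\neq 0$ gives an inter--cluster correction of order $O(\log t)$, while the slowest (within--cluster or fully parabolic) sub--dynamics obeys $\|\gamma\|\sim t^{2/3}$ governed by a central configuration, so the remainder is $O(t^{2/3})$ in every case. That each admissible type is actually realized for a prescribed limit shape then follows from the refined constructions of \cite{Maderna2009,daluz_maderna_2014,10.4007/annals.2020.192.2.5} and \cite{burgos2022existence}.

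The main obstacle is the compactness step in the existence argument: one must rule out that the minimizing sequence degenerates, either collapsing to a bounded curve or losing a sub--cluster escaping to infinity, before passing to the limit, and one must control collisions on the interior of the limit curve. This is exactly where Marchal's theorem and the a priori action bounds of Maderna--Venturelli \cite{10.4007/annals.2020.192.2.5} and Luz--Maderna \cite{daluz_maderna_2014} are indispensable, and where the case $h=0$ is the most delicate, since the Jacobi--Maupertuis metric degenerates at infinity; there the Busemann function must be shown to calibrate a genuinely parabolic ray asymptotic to the set of central configurations, as in \cite{Maderna2009,daluz_maderna_2014}.
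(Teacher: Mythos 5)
You should first note a structural point: the paper never proves Theorem \ref{exist of geodesic motions} at all --- it is stated explicitly as a summary of results it cites (\cite{Maderna2009}, \cite{daluz_maderna_2014}, \cite{10.4007/annals.2020.192.2.5}, \cite{burgos2022existence}), so there is no internal proof to compare against. That said, your architecture coincides with the machinery the paper assembles elsewhere, and the paper in fact runs your exact scheme for the case $h=0$ in subsection \ref{Parabolic motions starting from collison set}: a calibrating curve from any $x_0$ exists (Proposition \ref{Maderna: exist calibrating curve on [0,+infty)}), it is a free time minimizer (Lemma \ref{calibrating curve is free time minimizer}), hence a geodesic ray (Theorem \ref{ free time minimizer equivalent to geodesic ray}); it is not superhyperbolic (Proposition \ref{h-minimizer not superhyperbolic}), so the Marchal--Saari dichotomy (Theorem \ref{Classification of motions by Marchal}) yields $x(t)=at+O(t^{2/3})$ immediately, and expansiveness plus energy conservation pins down $\frac{1}{2}\|a\|^2=h$, whence the trichotomy by the location of $a$. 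Your proposal is this route generalized to $h\geq 0$, with the realization of each prescribed type correctly deferred to the cited papers --- which is also all the paper does.

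Two steps in your write-up are genuinely gapped. First, ``expansiveness makes $U(\gamma(t))\to 0$, so the conserved relation forces $\dot\gamma(t)\to a$'' is a non sequitur: energy conservation gives only $\|\dot\gamma(t)\|\to\sqrt{2h}$, i.e.\ convergence of the speed, not of the velocity vector. The clean fix is the minimizer route above (non-superhyperbolicity plus Theorem \ref{Classification of motions by Marchal} gives $at+O(t^{2/3})$ with no integration of the equations of motion). Your integration patch could also work, but as written it rests on the false estimate $\|\nabla U\|=O(\|\gamma\|^{-2})$: the force is controlled by the \emph{minimal mutual distance} $r(t)$, not by $\|\gamma(t)\|$ (a tight binary far from the origin has large $\|\gamma\|$ and enormous force). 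What saves it is the quantitative part of Theorem \ref{free time minmizer with h geq 0, all r_ij tend to infinity}, $r_{ij}\thickapprox t^{2/3}$ or $t$, whence $\|\ddot\gamma(t)\|=O(t^{-4/3})$; this is integrable, gives $\dot\gamma\to a$ with $\|\dot\gamma(t)-a\|=O(t^{-1/3})$, and a second integration gives the $O(t^{2/3})$ remainder uniformly in all three cases --- your $O(\log t)$ inter-cluster discussion is then beside the point. Second, the existence step, which you rightly flag as the main obstacle, is left unexecuted, and that is where the real content lives: a bare $C^0_{loc}$ limit of fixed-endpoint minimizers to $y_n$ does not by itself produce a ray defined on all of $[0,+\infty)$. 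The paper's Proposition \ref{Maderna: exist calibrating curve on [0,+infty)} supplies exactly this: one needs the Busemann function $u$ (whose existence, note, is only verified inside the proof of Theorem \ref{The set of geodesic rays is closed}; see Remark \ref{Busemann functions exist}), the calibration identity along the minimizers, a Zorn's-lemma maximal extension, von Zeipel's Theorem \ref{von Zeipel} to rule out a finite-time pseudo-collision with $I\to\infty$, and the bound $\phi_h(x,y)\leq\mu(\|x-y\|)$ of Theorem \ref{uniform bound for phi_hn} to close the contradiction. Your sketch names the right tools but does not carry out this argument.
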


Now we state an important theorem as following that will be used in proof of our main result.   
See Burgos and Maderna \cite{Burgos2022} for the proof of the theorem.
\begin{thm}\label{free time minmizer with h geq 0, all r_ij tend to infinity}
For any geodesic ray $\gamma:[0,+\infty)\rightarrow\Omega$ with energy constant $h\geq 0$, 
the motion is expansive, more precisely, $r_{ij}\thickapprox t^{2/3}$ or $r_{ij}\thickapprox t$.
\end{thm}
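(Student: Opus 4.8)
The plan is to separate the statement into its two assertions---that every mutual distance diverges (expansiveness) and that each one does so at one of the two prescribed rates---and to recover both from the second-order behaviour of the moment of inertia $I$ together with the variational (minimizing) character of the ray, which is the feature that distinguishes it from a generic solution of energy $h\ge 0$.

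First I would record the Lagrange--Jacobi identity. Since the Newtonian potential $U$ is homogeneous of degree $-1$, Euler's relation gives $\langle x,\nabla U\rangle=-U$, and combining this with Newton's equations and the energy relation $T=h+U$ yields
\begin{equation*}
\ddot I(t)=2T+2h=4h+2U>0 .
\end{equation*}
Hence $I$ is strictly convex and $\dot I$ strictly increasing along the ray. A short argument then forces $I(t)\to+\infty$: if $\dot I$ stayed negative for all $t$ it would increase to some limit $c\le 0$, forcing $\ddot I\to 0$, hence $U\to 0$ and all $r_{ij}\to+\infty$, which already contradicts $\dot I<0$. So $\dot I$ eventually becomes positive and $I\to+\infty$; the overall size of the configuration grows without bound.

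Next I would pin down the growth order of $I$ in each energy regime. For $h>0$ the bound $\ddot I\ge 4h$ integrates to $I(t)\ge 2h\,t^2\,(1+o(1))$, while the matching upper bound $I(t)\le Ct^2$ comes from minimality: an a priori comparison of the action of the ray against a path of linear growth keeps the speed $\|\dot x\|$ bounded, so the displacement is at most linear and $I\asymp t^2$. For $h=0$ the identity reads $\ddot I=2U$, and since the normalized potential $\tilde U=I^{1/2}U$ is bounded below by its minimum over the inertia sphere (attained at a central configuration), one gets the Sundman-type inequality $\ddot I\ge 2\tilde U_{\min}\,I^{-1/2}$, whose solutions grow at least like $t^{4/3}$; the same action comparison gives the reverse bound, so $I\asymp t^{4/3}$. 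This already produces the two candidate scales $t^{2/3}$ and $t$ for the linear dimension of the system.

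The heart of the argument, and the step I expect to be the main obstacle, is passing from this global control of $I$ to the individual distances $r_{ij}$. Here I would invoke the asymptotic analysis of minimizers: the bounded speed lets one show the asymptotic velocity $a=\lim_{t\to\infty}\dot x(t)$ exists, and I would partition the bodies into clusters according to equal values of $a_i$. Two bodies in different clusters satisfy $x_i-x_j=(a_i-a_j)t+o(t)$, hence $r_{ij}\asymp t$; within a single cluster the relative asymptotic velocities vanish, so the cluster forms an asymptotically parabolic subsystem whose own moment of inertia grows like $t^{4/3}$, giving $r_{ij}\asymp t^{2/3}$. The delicate point is precisely the lower bounds inside a cluster: one must rule out that any sub-collection of bodies stays permanently at bounded mutual distance. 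This is exactly where minimality is indispensable---an arbitrary solution with $h\ge0$ on $[0,+\infty)$ may well trap a bounded or oscillatory subsystem (as the paper's discussion of the positive-measure set of non-ray motions emphasizes), whereas a free-time minimizer cannot, since a trapped subsystem could be detached and re-emitted to strictly lower the action. Establishing this rigidity, via the limit-shape/central-configuration asymptotics of \cite{Maderna2009,Burgos2022} together with Chazy's asymptotic estimates, closes the dichotomy $r_{ij}\asymp t^{2/3}$ or $r_{ij}\asymp t$ and with it the expansiveness of the ray.
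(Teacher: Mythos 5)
First, a point of context: the paper does not prove this theorem at all --- it is quoted verbatim from Burgos and Maderna \cite{Burgos2022} (``See Burgos and Maderna \cite{Burgos2022} for the proof''), so the only benchmark is that paper's argument. Your outline reproduces the correct skeleton up to a point: the Lagrange--Jacobi identity $\ddot I=4h+2U$, and the combination of the Marchal--Saari dichotomy (Theorem \ref{Classification of motions by Marchal}) with the non-superhyperbolicity of free time minimizers (Proposition \ref{h-minimizer not superhyperbolic}, via the equivalence of Theorem \ref{ free time minimizer equivalent to geodesic ray}) indeed yield $\gamma(t)=at+O(t^{2/3})$, whence inter-cluster distances grow like $t$ and intra-cluster distances are $O(t^{2/3})$. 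But the decisive step --- the \emph{lower} bound $r_{ij}\gtrsim t^{2/3}$ inside a cluster, i.e.\ excluding subsystems that remain at bounded mutual distance --- is exactly where your proposal stops proving and starts citing: deferring ``this rigidity'' to \cite{Burgos2022} is circular, since that rigidity \emph{is} the theorem under discussion and is the principal content of the cited work. The heuristic that ``a trapped subsystem could be detached and re-emitted to strictly lower the action'' is not a proof: one must exhibit, for a minimizer carrying a bounded pair, a competitor in $AC(\gamma(0),\gamma(T))$ with strictly smaller $A_h$ uniformly in $T$, and the cost of separating a binary with negative internal energy against the gain elsewhere is precisely the delicate estimate one cannot wave away. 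Without it, Chazy's hyperbolic--elliptic motions (an escaping cluster carrying a bounded binary, which satisfies $x(t)=at+O(1)$ and hence falls squarely into the second Marchal--Saari alternative while violating expansiveness) are not excluded; note also that your claim that each cluster's internal moment of inertia grows like $t^{4/3}$ presupposes this very conclusion.

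There are also smaller flaws, all patchable. In the step ``$\dot I$ increases to $c\le 0$, forcing $\ddot I\to 0$'': a monotone bounded $\dot I$ only gives integrability of $\ddot I\ge 0$, hence $U\to 0$ along a subsequence, not $\ddot I\to 0$; the contradiction survives but the inference as written is false. For $h=0$ the upper bound $I\lesssim t^{4/3}$ does not follow from an unspecified ``action comparison''; it follows from energy conservation forcing $a=0$ (as in the paper's proof of parabolic existence: $\tfrac12\|a\|^2=h=0$) together with the $O(t^{2/3})$ remainder of Theorem \ref{Classification of motions by Marchal}. And the existence of the asymptotic velocity $a=\lim\dot x(t)$ is part of the Marchal--Saari/Chazy machinery, not a consequence of bounded speed alone. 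In sum: your reduction to the intra-cluster lower bound is the right diagnosis of where the difficulty lies, but the proposal leaves that step unproved, so it does not constitute a proof of the statement.
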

In this paper we do not directly prove Theorem \ref{The set of geodesic rays is closed}, 
on the contrary we use an important equivalence between free time minimizers and geodesic rays, please see Theorem \ref{ free time minimizer equivalent to geodesic ray}, 
and we use the methods concerned with free time minimizers.

Another interesting discussion of our paper is about the non-geodesic motions. It is known that many 
solutions without singularity are not geodesic, for example, the periodic solutions and the superhyperbolic motions. 
When considering the case of non-negative total energy, the set of solutions of Newtonian gravitational system 
that is not geodesic $may$ includes many relatively bad solution such as superhyperbolic motions and oscillatory motions.
Based on Theorem \ref{The set of geodesic rays is closed}, we have a natural result for that set, 
that at least for some small numbers of bodies, such set has positive measure, 
this assertion gives us a lot of imagination, we present it as the following proposition:
\begin{prop}\label{wide existence of non-geodesic motions}
    For $d=2,3$,  $N= 2,3,4$, and arbitrary $N$ positive point masses, there are many initial conditions leading to solutions which do exist in $[0,+\infty)$,
     they are not geodesic ray.
      The set of such initial data near the singularity set has non-zero measure.
 \end{prop}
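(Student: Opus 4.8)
\section*{Proof proposal}

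The plan is to exploit the expansiveness forced on geodesic rays together with the closedness of the set $GR$, and to produce near the collision set $\Delta$ an open family of initial data whose orbits are globally defined in forward time but fail to be expansive. First I would isolate a checkable obstruction to being a geodesic ray. By Theorem \ref{free time minmizer with h geq 0, all r_ij tend to infinity} every geodesic ray $\gamma:[0,+\infty)\to\Omega$ is expansive, that is, all mutual distances $r_{ij}(t)$ tend to $+\infty$. Hence any motion defined on the whole half-line $[0,+\infty)$ for which at least one mutual distance $r_{ij}(t)$ stays bounded cannot be a geodesic ray. This reduces the statement to exhibiting, with positive measure and near $\Delta$, globally forward-defined orbits that possess a permanently bounded pair.

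Next I would construct one such reference orbit near $\Delta$. For $N\in\{2,3,4\}$ and $d\in\{2,3\}$, take two of the bodies, say $x_1,x_2$, very close together and give them a relative velocity making them a bound Keplerian binary (negative internal energy and nonzero relative angular momentum, so that no binary collision occurs); place the remaining $N-2$ bodies far from the binary and from one another and send them off along escaping directions. For the binary sufficiently tight and the other bodies sufficiently distant and fast, the orbit is asymptotically a superposition of a bounded two-body motion and free receding particles, so it is defined for all $t\ge 0$, undergoes no collision, and $r_{12}(t)$ remains bounded. The associated initial position lies as close to $\Delta$ as desired. (For $N=2$ this is simply an elliptic Kepler orbit.)

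Then I would upgrade this single example to a set of positive measure. Since $GR$ is closed by Theorem \ref{The set of geodesic rays is closed}, its complement is open; the orbit just built is a non-geodesic, hence interior, point of this complement, so it has an entire neighborhood $U$ of initial data near $\Delta$, all of which are non-geodesic, and $U$ has positive Lebesgue measure. It remains to discard the data whose orbits are not globally defined: by Saari's theorem the set of initial conditions leading to a singularity in finite forward time has measure zero for $N\le 4$, so $U$ minus this null set still has positive measure, lies near $\Delta$, and consists entirely of globally forward-defined motions that are not geodesic rays. I expect the role of closedness to be precisely this: it makes the whole neighborhood automatically non-geodesic, so one need not analyze the qualitative behavior of every perturbed orbit, only exclude the measure-zero singular data.

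The main obstacle is the global-in-time control in the construction step, namely proving that for the reference example the bound binary is never disrupted and no collision ever occurs over the infinite interval $[0,+\infty)$. This requires estimating the tidal forcing exerted by the receding bodies, which decays fast enough to be integrable, and showing by an averaging or adiabatic argument that the binary's internal energy stays negative for all time; alternatively one may invoke classical stability results for tight hierarchical configurations. If one prefers a route that does not rely on Theorem \ref{The set of geodesic rays is closed} (for instance in the excluded cases $N=2$ or $d=2$), the same estimates must be carried out for an \emph{open} family of initial data rather than a single orbit, which is more demanding but yields the open non-expansive set directly.
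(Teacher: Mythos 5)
Your last two steps reproduce the paper's mechanism exactly: closedness of $GR$ (Theorem \ref{The set of geodesic rays is closed}) makes its complement open, so a single non-geodesic point yields a whole neighborhood $U$ of non-geodesic data, and Saari's theorem then discards the measure-zero set $NC$ of singular initial conditions. The genuine gap is where you spend all your effort: manufacturing the anchor point, a globally defined orbit with a permanently bound binary near $\Delta$. As you yourself flag, nothing in your sketch proves that the tight Keplerian binary survives for all $t\ge 0$ under the tidal forcing of the receding bodies, with no disruption and no collision; for $N=3$ this could be extracted from classical results on hyperbolic-elliptic final motions, but for $N=4$ the ``averaging or adiabatic argument'' you allude to is a substantial theorem, not a routine estimate. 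As written, the reference orbit is not established, so the neighborhood $U$ has nothing to sit around. Your opening reduction (geodesic rays are expansive by Theorem \ref{free time minmizer with h geq 0, all r_ij tend to infinity}, so a bounded pair obstructs geodesicity) is correct but ends up unused: once closedness is invoked you never need to certify non-geodesicity of any individual perturbed orbit, only of the anchor.

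The paper sidesteps the construction entirely by choosing its anchor points inside $NC$ itself: a geodesic ray is defined on all of $[0,+\infty)$, so no initial condition leading to a finite-time singularity can lie in $GR$, and such data exist trivially (collision orbits). Closedness of $GR$ then gives around each $p\in NC$ a ball $B(p,r_p)$ disjoint from $GR$, and $\bigcup_{p\in NC}B(p,r_p)\setminus NC$ has positive measure by Saari, consists of globally defined non-geodesic motions, and is literally ``near the singularity set'' $NC$, which is what the proposition asserts. Note also that your set is located near the collision configuration set $\Delta$ rather than near $NC$; these are different objects in this paper (initial positions in $\Delta$ are read as ejections and can perfectly well belong to $GR$ by Theorem \ref{exist of geodesic motions}), so even granting your construction, the ``near the singularity set'' clause would need a separate argument. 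If you replace your hierarchical binary by any singular initial datum, your proof collapses to the paper's and the gap disappears.
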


 \begin{proof}

 According to Theorem \ref{The set of geodesic rays is closed}, in $GR$ there cannot be a sequence with a limit leading to motions with singularities. 
 For any point $p$ in the set of initial conditions $NC$ that leads to the singularity,
 there must be an open ball $B(p,r_p)$ which contains no point in $GR$. On the other hand, Saari \cite{SAARI197780} 
 found that $NC$ is of measure zero in case $d=2,3$ and $N=2,3,4$,
 hence $\cup_{p\in NC} B(p,r_p) \setminus NC$ has non-zero measure, it is the desired set.

 \end{proof}
Proposition \ref{wide existence of non-geodesic motions} implies that even for the positive total energy,
the solutions near the singularity set suffer no singularity and do not minimize the Lagrangian action in such a strong sense.
These solutions are close to the singularity set so that their behaviors may not look good.

We organize the paper as follows. The second section is devoted to introducing the background material, some results 
given by variational method and notations.  
In particular,
subsection \ref{Superhyperbolic motions} introduces superhyperbolic motions, 
and
subsection \ref{Parabolic motions starting from collison set}
gives a brief proof of the existence of parabolic motions.
In the third section, our main result is proved, and a question is proposed.

\section{Variational Setting and Some Results}

 The dynamics of the $N$ bodies are ruled by the Newton's universal gravitation law, that is
\begin{equation}\label{Newton motion equation}
    m_i\ddot{x}_i=\nabla_{x_i} U(x),\text{ for } 1\leq i\leq N,
  \end{equation}
 the Newtonian potential
\begin{equation}
   V(x) =  -U(x): = -\sum_{1\leq i<j \leq N}\frac{m_i m_j}{\vert x_i - x_j\vert},
\end{equation}
where we take suitable units such that the gravitation constant $G=1$.

A configuration that absolutely minimizes $I^{1/2}U$ is also called a minimizing central configuration.

The simplest hyperbolic motions or parabolic motions are those homothetic type, in these situations, the limiting configurations are central configurations. 
These constructions limit the choices of initial conditions giving rise to these kinds of solutions, since the central configurations up to similarity
are finite, at least for $N=2,3,4,5$, see Hampton and Moeckel \cite{Hampton2006} for the spatial four body problem, Albouy and Kaloshin \cite{10.2307/23234173} for the planar five body case
and Hampton and Jensen \cite{Hampton2011} the spatial five body problem.
For more general $N$-body problem, Painlev\'{e}-Wintner conjecture that the number of central configurations up to similarity is finite.
However, recent studies give us a broad view that the initial conditions leading to our desired orbits are not necessarily central configurations
and they do exist almost everywhere.

The Lagrangian $L$ on $E^N \times E^N$ is defined as
  \begin{equation}
    L(x,v)=\frac{1}{2}\|v\|^{2}+U(x).
  \end{equation}
  According to  the famous Hamilton's principle, a solution of the Newton's equation joining two arbitrary given $x,y\in E^N$ must be a critical point of the Lagrangian action, and more precisely
  a minimizer of Lagrangian action among space of absolutely continuous curves joining $x$ and $y$ with  fixed time interval. 
  The existence of such minimizers  was already confirmed by Tonelli's theorem, see F. Clarke  \cite[p321, Theorem 16.2]{clarke2013functional}.
  We next give this approach explicitly.
  
  Define the set of absolutely curves
  
  \begin{equation}
     AC  (x,y,T):=\left\{\gamma:[0,T]\rightarrow E^N \text{ is absolutely continuous, } \gamma(0)=x,\gamma(T)=y\right\}
   \end{equation}
   and
   \begin{equation}
     AC  (x,y)=\cup_{T>0}   AC  (x,y,T),
   \end{equation}
   since without loss of generality, we can always assume each curve starting at $0$ ending at some $T>0$ for $T$.

The Hamiltonian $H$ of the Lagrangian is the first integral of the solutions to the Newton's equation, and 
$h$ is the total energy constant.
The Hamilton-Jacobi equation is 
\begin{equation}\label{HJ equation}
    H(x,d_x u)= \frac{1}{2}\|d_x u\|_* -U(x) =h,
\end{equation}
where the norm $\|\cdot\|_*$ is the norm of $(E^*)^N$, for $p\in (E^*)^N$,
\begin{equation}
\|p\|_* = \sum_{i=1}^N m_i^{-1}\vert p_i\vert^2.
\end{equation}
The Lagrangian action of $\gamma\in  AC  (x,y)$ with energy constant $h\geq 0$ is

\begin{equation}
    A_h (\gamma)=\int_0^TL(\gamma,\dot{\gamma})+hdt= \int_0^T \frac{1}{2}\|\dot{\gamma}\|^{2}+U(\gamma)+hdt
  \end{equation}
when $h=0$, we simply write $A$. It is well defined since $\Omega$ is connected, and for all $x,y\in\Omega$ there is a smooth $\gamma\in  AC  (x,y,T)$ and $U$ is bounded 
in the compact set $\gamma\vert_{[0,T]}$ hence $A_h (\gamma)<\infty$ and $0\leq A_h\not\equiv+\infty$.
  
  However, there was a big obstacle for the calculus of variations that under Newton potential there will be trajectory with isolated collisions and its Lagrangian action is finite, 
  this discovery is due to Poincar{\'e} \cite{zbMATH02677123}, leading to a question that a minimizer may not be a true motion.
  But mathematicians did not worry about it long since Marchal   \cite{Marchal2002} have made a great breakthrough,  making sure 
  in case of more than 2 dimensional, the minimization process 
  confirms the absence of collision in the classical Newtonian $N$-body problem, see more in Chenciner \cite{zbMATH01789893}.
  In 1-D case, Yu and Zhang \cite{Yu2018} has obtained important results under some conditions.

  \begin{thm}\label{Marchal: minimizer has no collison}
    (Marchal \cite{Marchal2002} Chenciner \cite{zbMATH01789893})
    If $\gamma\in AC (x,y)$ is defined over some interval $[0,T]\subset\mathbb{R}$ 
    and it minimizes $A$ over $ AC (x,y,T)$, then $\gamma$ experiences no collison for all $t\in(0,T)$.
\end{thm}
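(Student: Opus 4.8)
The plan is to argue by contradiction through \emph{Marchal's averaged variation}, the device that makes the hypothesis $d\ge 2$ indispensable. Suppose $\gamma$ minimizes $A$ over $AC(x,y,T)$ and yet suffers a collision at some interior instant $t_0\in(0,T)$. I would first record the structure of such a collision along an action minimizer: collisions are isolated in time, the colliding cluster shrinks homothetically to a central configuration, and the mutual distances of the colliding bodies obey the Sundman--Wintner estimate $r_{ij}(t)\asymp |t-t_0|^{2/3}$. In particular at least one body, say $x_i$, has $r_{ij}(t)\to 0$ for some $j$ as $t\to t_0$, while $U(\gamma(t))\to+\infty$ with $\int U(\gamma)\,dt$ still convergent near $t_0$.

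Next I would build a one--parameter family of competitors by displacing only the single body $x_i$. Fix a smooth bump $\chi\colon[0,T]\to[0,1]$ supported in a small neighbourhood of $t_0$ with $\chi\equiv 1$ near $t_0$, a small parameter $\varepsilon>0$, and a unit vector $e\in S^{d-1}\subset E$; let $\gamma_e$ agree with $\gamma$ except that its $i$-th component is replaced by $x_i(t)+\varepsilon\chi(t)\,e$. The point of letting $e$ range over a whole sphere is that averaging over $e\in S^{d-1}$ annihilates every first--order term, since $\int_{S^{d-1}} e\,de=0$. Thus the averaged kinetic contribution of the perturbation reduces to the purely quadratic cost $\tfrac12 m_i\varepsilon^2\int\dot\chi^2\,dt=O(\varepsilon^2)$, which is insensitive to the collision.

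The heart of the matter is the effect of this spherical average on the interaction terms of body $i$. After averaging, each pair potential $m_im_j/|x_i-x_j|$ turns into the spherical mean of $1/|\cdot|$ over the sphere of radius $\rho=\varepsilon\chi$ centred at $z_{ij}=x_i-x_j$. By Newton's theorem (sharply in $d=3$, where $1/r$ is harmonic, and by its dimensional analogue otherwise) this mean coincides with $1/|z_{ij}|$ up to a smooth $O(\rho^2)$ correction as long as $|z_{ij}|\ge\rho$, while it collapses to a bounded value of order $1/\rho$ once $|z_{ij}|<\rho$. Summing over $j$, the averaged interaction energy of body $i$ therefore never exceeds its original value by more than $O(\varepsilon^2)$, and it is \emph{strictly} smaller on the time--set where $r_{ij}<\rho$ for the colliding partner.

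It remains to show that this strict potential gain overpowers the quadratic cost. Feeding in $r_{ij}\asymp|t-t_0|^{2/3}$, the set $\{\,r_{ij}<\varepsilon\,\}$ is an interval of length of order $\varepsilon^{3/2}$ on which $1/r_{ij}-1/\varepsilon$ is comparable to $|t-t_0|^{-2/3}$, so the total potential gain is of order $\varepsilon^{1/2}$, dwarfing the $O(\varepsilon^2)$ penalty and the smooth corrections from the non--colliding pairs. Hence for $\varepsilon$ small the $e$-average of $A(\gamma_e)$ is strictly below $A(\gamma)$, so some $\gamma_e$ beats $\gamma$, contradicting minimality. I expect the main obstacle to be exactly this quantitative balancing, together with its dimensional subtlety: the clean picture above is a $d\ge 3$ argument, since there the displaced trajectory meets the singularity only for a measure--zero set of directions $e$ and the time--integrals stay integrable in $e$; when $d=2$ the ``bad'' directions can fill an arc of positive measure in $S^1$, so the averaging must be localized and rescaled in tune with the collision rate, which is the genuinely delicate part and the reason the hypothesis reads $d\ge 2$ rather than $d\ge 3$.
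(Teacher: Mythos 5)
The paper itself does not prove this theorem: it quotes it from Marchal \cite{Marchal2002} and Chenciner \cite{zbMATH01789893}, so your proposal must be measured against the proof in those references --- and it is indeed a reconstruction of exactly that argument, Marchal's averaged variation: displace one colliding body by $\varepsilon\chi(t)e$, average over $e$ in a sphere so that $\int_{S^{d-1}}e\,de=0$ kills the first-order kinetic term, and use Newton's theorem together with Sundman's estimate $r_{ij}\asymp|t-t_0|^{2/3}$ to extract a potential gain of order $\varepsilon^{1/2}$ on $\{r_{ij}<\varepsilon\}$ that beats the $O(\varepsilon^2)$ cost. For $d\geq 3$ the outline is sound, with two caveats: the facts you ``record'' at the outset (isolation of collision instants of a minimizer, and the Sundman asymptotics on each one-sided solution arc) themselves need proof, though they are standard and available in the cited literature; and you should not invoke homothetic convergence of the colliding cluster to a central configuration --- that is the unresolved infinite-spin issue, and the argument needs only $r_{ij}\asymp|t-t_0|^{2/3}$ and the integrability of $U$ near $t_0$, which is all you actually use. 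Note also that in $d=3$ Newton's theorem is exact, with no $O(\rho^2)$ correction outside the ball.

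The genuine gap is the case $d=2$, and it is not where you locate it. In the plane $1/r$ is \emph{subharmonic}: $\Delta(r^{-1})=(3-d)\,r^{-3}=r^{-3}>0$ for $d=2$, so the circular mean of $1/|z-\rho e|$ \emph{exceeds} $1/|z|$ for $|z|>\rho$, with pointwise excess $\approx\tfrac14\rho^{2}|z|^{-3}$. Your claim that the averaged interaction of body $i$ ``never exceeds its original value by more than $O(\varepsilon^2)$'' therefore fails precisely along the colliding pair: integrating this excess over $\{r_{ij}\geq\varepsilon\}$ with $r_{ij}\asymp|t-t_0|^{2/3}$ gives $\varepsilon^{2}\int_{\varepsilon^{3/2}}|t|^{-2}\,dt\asymp\varepsilon^{1/2}$, which is of the \emph{same} order as the gain you harvest from $\{r_{ij}<\varepsilon\}$. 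At order $\varepsilon^{1/2}$ the net sign is thus a competition between explicit constants, not a domination, and closing it requires the sharp computation of circular means of $r^{-1}$ carried out by Ferrario and Terracini in their extension of Marchal's lemma. Your diagnosis that the planar difficulty comes from a positive-measure arc of ``bad'' directions is a red herring --- the averaged potential remains integrable in $e$ for all $d\geq 2$; what is delicate in the plane is that the mean-value inequality points the wrong way outside the collision ball. As written, your proof is complete only for $d\geq 3$.
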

Therefore a minimizer of the action is a solution of the Newtonian equation \ref{Newton motion equation} in $(0,T)$ and a true motion of Newtonian systems.

As investigated by Saari \cite{10.2307/1995752,SAARI197780}, we know that for the planar or three-dimensional  two, three or 
four body problems under Newtonian potential, in the sense of Lebesgue, almost all $(x,v)\in T\Omega$  leading to solutions confirm no singularities and exist for $[0,+\infty)$.
We denote  $NC\subset TE^N$ as the set of initial coniditions leading to motions that suffer singularities at some definite times.

Let $x,y\in E^N$ be arbitrary, we define the fixed time action potential for time $T$:
\begin{equation*}
    \phi(x,y,T)=\inf \left\{ A(\gamma)\big\vert\gamma\in AC (x,y,T) \right\},
\end{equation*}
and the free time action potential $ \phi_h (\cdot,\cdot)$:
\begin{equation*}
    \phi_h (x,y)=\inf\{A_h (\gamma)\mid\gamma\in  AC  (x,y)\}.
\end{equation*}
A curve $\gamma_0\in  AC  (x,y,T)$ is called a fixed-time minimizer of $A$ if it minimizes $A$ in $ AC  (x,y,T)$.
Similarly, a curve $\gamma_0\in  AC  (x,y)$ is called a free time minimizer of $A_h$ if it minimizes $A_h$ in $ AC  (x,y)$, 
the existence of free time minimizers is proved, see  \cite[Theorem 3.1]{daluz_maderna_2014} and  \cite[Lemma 4.2]{10.4007/annals.2020.192.2.5}.
It is obvious that a free time minimizer is a fixed time minimizer hence experiences no collision in the middle of the interval.

\begin{thm}\label{free time minimizer of phi_h exist}
    In a Newtonian $N$-bodies system with energy constant $h\geq 0$, free time minimizing curve of its action potential $\phi_h(x,y)$ does exist for any distinct $x,y\in E^{N}$.
\end{thm}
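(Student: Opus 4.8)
The plan is to reduce the free-time problem to a one-parameter minimization over the duration $T$. Since $AC(x,y)=\cup_{T>0}AC(x,y,T)$ and every $\gamma\in AC(x,y,T)$ satisfies $A_h(\gamma)=A(\gamma)+hT$, the free time action potential factors as
\begin{equation*}
\phi_h(x,y)=\inf_{T>0}\big[\phi(x,y,T)+hT\big].
\end{equation*}
For each fixed $T$, Tonelli's theorem already produces a fixed-time minimizer realizing $\phi(x,y,T)$, and Marchal's Theorem \ref{Marchal: minimizer has no collison} guarantees it is collision-free on $(0,T)$, hence a genuine motion. Thus it suffices to prove that the scalar function $\psi(T):=\phi(x,y,T)+hT$ attains its infimum at some interior point $T^*\in(0,+\infty)$: the associated Tonelli minimizer $\gamma^*$ then satisfies $A_h(\gamma^*)=\psi(T^*)=\phi_h(x,y)$ and is the desired free time minimizer.

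The first step is to rule out degeneration of the duration at the two ends. As $T\to 0^+$, Cauchy--Schwarz together with $U\geq 0$ gives $A(\gamma)\geq \tfrac12\int_0^T\|\dot\gamma\|^2\,dt\geq \|x-y\|^2/(2T)$, since the length of $\gamma$ dominates the displacement $\|x-y\|>0$; hence $\psi(T)\geq\|x-y\|^2/(2T)\to+\infty$. As $T\to+\infty$ in the case $h>0$, the energy term alone forces $\psi(T)\geq hT\to+\infty$. In both regimes $\psi$ is coercive, so any minimizing sequence of durations stays in a compact subinterval of $(0,+\infty)$.

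The remaining and most delicate case is $h=0$ as $T\to+\infty$, where $\psi(T)=\phi(x,y,T)$ and neither term is manifestly coercive. Here I would prove directly that $\phi(x,y,T)\to+\infty$: any competitor of duration $T$ obeys $A(\gamma)\geq\int_0^T U\,dt$, and a curve that cheapens the potential by drifting out to radius $R$ (where $U$ is of order $1/R$) must still pay kinetic cost of order $R^2/T$ to travel out and back, while its potential cost is of order $T/R$; optimizing over $R$ (balancing at $R\sim T^{2/3}$) yields $A\gtrsim T^{1/3}\to+\infty$, which is exactly the parabolic scaling. Combined with the (standard) lower semicontinuity, indeed continuity, of $T\mapsto\phi(x,y,T)$, the function $\psi$ is then lower semicontinuous and coercive on $(0,+\infty)$ for every $h\geq 0$, so its minimum is attained at some $T^*>0$, completing the reduction. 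I expect the main obstacle to be making this $h=0$ escape estimate rigorous, that is, controlling a minimizing sequence that tries to lower the potential by wandering toward infinity; equivalently, in the Jacobi--Maupertuis formulation $\phi_h(x,y)=\inf_\sigma\int\sqrt{2(U+h)}\,\|d\sigma\|$, one must confine the minimizing paths to a fixed compact subset of $\Omega$, which is subtle precisely because the Jacobi--Maupertuis metric is incomplete both near the collision set $\Delta$ and, a priori, at infinity.
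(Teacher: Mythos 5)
The paper itself offers no proof of Theorem \ref{free time minimizer of phi_h exist}: it is imported by citation, from \cite[Theorem 3.1]{daluz_maderna_2014} for $h=0$ and \cite[Lemma 4.2]{10.4007/annals.2020.192.2.5} for $h>0$. So the honest comparison is with those references, and your reduction is essentially their argument, correctly reconstructed. The decomposition $\phi_h(x,y)=\inf_{T>0}\left[\phi(x,y,T)+hT\right]$ is legitimate and is in fact used elsewhere in this very paper (in the proof of Theorem \ref{uniform bound for phi_hn}); Tonelli plus Marchal (Theorem \ref{Marchal: minimizer has no collison}) handles each fixed $T$; and your coercivity bounds as $T\to 0^+$ (using $x\neq y$, which is exactly why the theorem assumes distinct endpoints) and, for $h>0$, as $T\to+\infty$ are sound.

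The two points you flag as delicate both close by standard means, so there is no genuine gap. For the $h=0$ escape estimate, the missing ingredient is homogeneity: the normalized potential $\tilde U=I^{1/2}U$ is continuous and positive on the unit sphere and blows up at collisions, so it attains a minimum $U_0>0$ there, whence $U(z)\geq U_0\|z\|^{-1}$ for all $z$. Setting $R=\max_{[0,T]}\|\gamma\|$, either $R\leq T^{2/3}$, in which case
\begin{equation*}
A(\gamma)\geq\int_0^T U(\gamma)\,dt\geq U_0\,T^{1/3},
\end{equation*}
or $R>T^{2/3}$, in which case Cauchy--Schwarz applied up to a time where $\|\gamma\|$ exceeds $T^{2/3}$ gives $A(\gamma)\geq (T^{2/3}-\|x\|)^2/(2T)\geq c\,T^{1/3}$ for $T$ large. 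Hence $\phi(x,y,T)\gtrsim T^{1/3}$, which rigorizes your balancing heuristic and also dissolves your worry about confining minimizing paths in the Jacobi--Maupertuis picture: in the Lagrangian formulation no confinement is needed, only this lower bound. For the regularity of $T\mapsto\phi(x,y,T)$, the linear rescaling $\gamma_{T'}(t)=\gamma(tT/T')$ sends a near-minimizer for time $T$ to a competitor for time $T'$ with action $\frac{T}{T'}K+\frac{T'}{T}P$ ($K$, $P$ the kinetic and potential parts), which yields continuity, not just lower semicontinuity, directly. It is worth noting that the tension you exploit --- a superlinear lower bound against the sublinear upper bound $\mu$ of Theorem \ref{uniform bound for phi_hn} --- is precisely the mechanism the paper uses in Proposition \ref{h-minimizer not superhyperbolic} and at the end of the proof of Proposition \ref{Maderna: exist calibrating curve on [0,+infty)}, so your argument is fully in the spirit of the surrounding text.
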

A curve $\gamma$ defined in $[0,+\infty)$ is called a free time minimizer if it is a free time minimizer 
of the Lagrangian action when restricted in any compact subinterval $[a,b]\subset [0,+\infty)$,
that is, $\gamma_{[a,b]}$ minimizes Lagrangian over $ AC (\gamma(a),\gamma(b))$.
Following the idea of Marchal, if $\gamma\in AC (x,y,T)$ minimizes $A_h$ over $ AC (x,y)$, then it must minimize $A_h$
over $ AC (x,y,T)$, hence minimizes $A$ over $ AC (x,y,T)$ since $A_h=A+hT$.
Therefore, if $\gamma\in AC (x,y)$ minimizes $A_h$, then $\gamma\vert_{(0,T)}\subset\Omega$.

We have to mention that the methods of free time action minimizing is not the only way to find those expansive motions.
For instance,
Maderna and Venturelli \cite{Maderna2009} did not assert the solution $x(t)$ approximating to a given central configuration to be free time minimizing.
There are some other literatures as examples, see Duignan, Moeckel, Montgomery and Yu \cite{Duignan2020} for bi-hyperbolic motions,
and Zhang \cite{Zhang2012} for restricted three body problem. On the other hand, Hu, Ou and Yu  \cite{Hu2021} suggested that the results in  \cite{Hu2021}  could
be useful in the application of non-action minimization methods in the Newtonian
$N$-body problem.
Therefore it is a natural question that whether a hyperbolic, a parabolic or a partial hyperbolic motion is necessarily geodesic,
this is an unsolved problem.

The Jacobi-Maupertuis metric is defined as $j_h = 2(h+U)g_m$.
A geodesic ray of the $N$-body system  is an arclength parameterized geodesic $\gamma:[0,+\infty) \rightarrow E^N$ such that all of its restrictions to compact subintervals are minimizing geodesics.

The total energy constant of a given motion $x(t)$ is $h=\frac{1}{2}\|\dot{x}(t)\|-U(x(t))$, which implies that the motion must be confined to the Hill's region:
\begin{equation}
    \Omega_h=\{x \in \Omega \mid U(x) \geqq-h\} .
\end{equation}
We see that $\Omega_h=\Omega$ if and only if $h \geqq 0$, 
the geodesics of the Jacobi-Maupertuis metric and 
free time minimizers of $A_h$ are equivalent, roughly speaking, this is probably because 
for any $\gamma\in AC (x,y)$,
\begin{align*}
    \int_s^t \|\dot{\gamma}\|_{j_h} d\tau & = \int_s^t \sqrt{2(h+U)}\|\dot{\gamma}\| d\tau = \int_s^t \sqrt{2}\left(\frac{1}{2}\|\dot{\gamma}\|^2 -U+U \right)^{1/2}\|\dot{\gamma}\|d\tau \\
       & = \int_s^t \|\dot{\gamma}\|^2 d\tau = \int_s^t \frac{1}{2}\|\dot{\gamma}\|^2 + U + \frac{1}{2}\|\dot{\gamma}\|^2 -U d\tau \\
       & = \int_s^t L(\gamma,\dot{\gamma}) +h d\tau = A_h\left(\gamma\vert_{[s,t]}\right).
\end{align*}
The rigorous proof is given by Burgos and Maderna \cite{Burgos2022}.

\begin{thm}\label{ free time minimizer equivalent to geodesic ray}
    After a suitable reparameterization, a geodesic ray of the metric $j_h$ is a free time minimizer of $A_h$,
    and a free time minimizer of $A_h$ is a geodesic ray of the metric.
\end{thm}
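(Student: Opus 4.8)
The plan is to exploit the pointwise arithmetic--geometric mean inequality that turns the displayed identity in the excerpt into an equality precisely along energy-$h$ parameterizations, and then to transfer minimality between the two functionals using the fact that the Jacobi--Maupertuis length is reparameterization invariant. Concretely, for any absolutely continuous $\gamma$ with values in $\Omega$ and any $h\geq 0$ one has $U>0$, hence $h+U>0$, and the elementary inequality
\[
\tfrac12\|\dot\gamma\|^2 + \bigl(U(\gamma)+h\bigr) \;\geq\; \sqrt{2\bigl(h+U(\gamma)\bigr)}\,\|\dot\gamma\| \;=\; \|\dot\gamma\|_{j_h}
\]
holds pointwise, with equality if and only if $\tfrac12\|\dot\gamma\|^2 = U(\gamma)+h$. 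Integrating over any subinterval gives $A_h(\gamma)\geq \ell_{j_h}(\gamma)$, where $\ell_{j_h}$ denotes $j_h$-length, with equality exactly when $\gamma$ carries the energy $h$. This single inequality is the backbone of both implications.

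First I would establish the reparameterization lemma: given a curve $c$ parameterized by $j_h$-arclength, so that $2(h+U)\|\dot c\|^2\equiv 1$, the time change determined by $\dot s = 2\bigl(h+U(c)\bigr)$ produces a curve $\gamma(t)=c(s(t))$ satisfying the energy relation $\tfrac12\|\dot\gamma\|^2=U(\gamma)+h$; conversely any energy-$h$ curve becomes arclength parameterized after the inverse change. Because $h+U$ is smooth and strictly positive on $\Omega$, this change of variable is a well-defined increasing diffeomorphism onto its image, and along it $A_h$ and $\ell_{j_h}$ agree by the equality case above.

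For the direction geodesic ray $\Rightarrow$ free time minimizer, I would reparameterize the arclength geodesic ray $c$ to the energy-$h$ curve $\gamma$ and check that each compact restriction minimizes $A_h$: for any competitor $\eta$ joining the same endpoints one has $A_h(\eta)\geq \ell_{j_h}(\eta)\geq \ell_{j_h}\bigl(c|_{[a,b]}\bigr) = A_h\bigl(\gamma|_{[\cdot]}\bigr)$, where the middle inequality is the minimizing-geodesic property and the last equality is the equality case. For the converse, I would first argue that a free time minimizer has energy exactly $h$ --- this is where the freedom in the length of the time interval enters, since the first variation of $T\mapsto A_h$ along the optimal reparameterization must vanish and forces $\tfrac12\|\dot\gamma\|^2=U+h$ --- so that $A_h(\gamma)=\ell_{j_h}(\gamma)$; then its arclength reparameterization $c$ minimizes $j_h$-length on every compact subinterval by the symmetric chain of inequalities, hence is a geodesic ray.

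The main obstacle I anticipate is twofold and technical rather than conceptual. The first delicate point is proving the energy identity for free time minimizers cleanly, that is, justifying the first variation in the time variable on the non-compact domain $[0,+\infty)$ and excluding degeneracies where $\|\dot\gamma\|$ vanishes; here I would localize to compact subintervals and use that a free time minimizer restricted to $[a,b]$ is a fixed-time minimizer, invoking Theorem \ref{Marchal: minimizer has no collison} to keep the motion in $\Omega$ where $U$ is finite. The second is verifying that the reparameterization carries $[0,+\infty)$ onto $[0,+\infty)$ in both directions, which follows because geodesic rays are expansive by Theorem \ref{free time minmizer with h geq 0, all r_ij tend to infinity}, so both the physical time and the $j_h$-arclength run to infinity, making the time change a homeomorphism of the half-line onto itself.
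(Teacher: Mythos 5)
Your proposal is correct in outline, but you should know that the paper itself does not prove this theorem: it only displays the equality-case computation along energy-$h$ curves (``roughly speaking, this is probably because\dots'') and explicitly defers the rigorous proof to Burgos and Maderna \cite{Burgos2022}. What you wrote is, in substance, the classical Maupertuis-principle argument, and it is essentially the route the cited reference takes; relative to the paper's text you supply the two genuinely missing ingredients. First, the inequality direction $A_h(\gamma)\geq \ell_{j_h}(\gamma)$ via the AM--GM inequality, with equality exactly at energy-$h$ parameterizations: note that the paper's displayed chain of equalities is obtained by silently substituting $h=\tfrac12\|\dot\gamma\|^2-U$, i.e.\ it is only valid in your equality case, so your inequality is what turns the heuristic into a proof that energy-$h$ curves are the optimal parameterizations. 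Second, the proof that a free time minimizer actually has energy $h$, which is precisely where the freedom in the length of the time interval is spent. This buys a self-contained argument where the paper has only a citation.

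Two technical points deserve tightening. The linear time-scaling variation $\gamma_s(t)=\gamma(t/s)$ gives only the integral identity $\int \tfrac12\|\dot\gamma\|^2\,dt=\int \bigl(U(\gamma)+h\bigr)\,dt$ on each compact piece, not the pointwise one; to upgrade it, either use localized reparameterizations, or more simply combine it with conservation of energy, which is legitimate because by Theorem \ref{Marchal: minimizer has no collison} the restriction of a free time minimizer is a true collision-free solution on the open interval, so $\tfrac12\|\dot\gamma\|^2-U\equiv e$ is constant and the integral identity forces $e=h$. Second, when you argue that the reparameterization carries $[0,+\infty)$ onto $[0,+\infty)$, you invoke Theorem \ref{free time minmizer with h geq 0, all r_ij tend to infinity}, which the paper states for geodesic rays; to avoid circularity in the converse direction you need the expansiveness estimates in their free-time-minimizer form (which is how \cite{Burgos2022} proves them). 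Concretely, for $h>0$ divergence is immediate from $\dot s=2(h+U)\geq 2h$ and $A_h\bigl(\gamma\vert_{[0,t]}\bigr)\geq ht$; for $h=0$ one uses $r_{ij}\approx t^{2/3}$, hence $U\approx t^{-2/3}\notin L^1$, so $s(t)=\int_0^t 2U\,d\tau\to+\infty$, and conversely $t(s)=\int_0^s (2U)^{-1}d\sigma\to+\infty$ since $U\to 0$ along the ray. Finally, in the direction geodesic ray $\Rightarrow$ minimizer, competitors $\eta\in AC(x,y)$ of finite action may cross $\Delta$ at isolated times where $U=+\infty$; your pointwise inequality still holds almost everywhere and the length comparison goes through on the completion $(E^N,\phi_h)$ of $(\Omega,d_h)$, but this deserves an explicit sentence.
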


Further more, we denote by $d_h$ the Riemannian distance induced by the metric $j_h$,
 Maderna and Venturelli \cite{10.4007/annals.2020.192.2.5} have pointed out that the completion of $(\Omega, d_h)$ is exactly $(E^N,\phi_h)$.
 $\phi_h$ is a distance,
 it is verified by Maderna \cite{maderna_2012} for $h=0$, but it also holds for cases of $h\geq 0$. We owe the proof of the following proposition to Maderna. 
\begin{prop}(Maderna \cite{maderna_2012})
For any $h\geq 0$, $ \phi_h (\cdot,\cdot)$ is a distance in $E^N$.
\end{prop}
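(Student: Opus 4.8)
The plan is to verify directly that $\phi_h$ satisfies the four axioms of a distance on $E^N$: non-negativity together with finiteness, symmetry, the triangle inequality, and non-degeneracy. Three of these are essentially formal. Non-negativity is immediate, since $h\geq 0$ and $U>0$ make the integrand $\frac12\|\dot\gamma\|^2+U(\gamma)+h$ pointwise non-negative, so $A_h(\gamma)\geq 0$ for every admissible $\gamma$ and hence $\phi_h\geq 0$; finiteness for distinct $x,y$ is supplied by the existence of a free time minimizer (Theorem \ref{free time minimizer of phi_h exist}), which already covers the case $x\in\Delta$ or $y\in\Delta$. Symmetry follows because the Lagrangian is even in the velocity and $U$ depends only on position: the time reversal $\bar\gamma(t)=\gamma(T-t)$ sends $AC(x,y)$ bijectively onto $AC(y,x)$ while preserving $A_h$, so passing to infima gives $\phi_h(x,y)=\phi_h(y,x)$. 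For the triangle inequality I would concatenate: given $\gamma_1\in AC(x,y)$ and $\gamma_2\in AC(y,z)$, their juxtaposition lies in $AC(x,z)=\cup_{T>0}AC(x,z,T)$ (the free-time framework permits rescaling the total time interval) and its action equals $A_h(\gamma_1)+A_h(\gamma_2)$; minimizing over both factors yields $\phi_h(x,z)\leq\phi_h(x,y)+\phi_h(y,z)$.

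The substance lies in non-degeneracy. To show $\phi_h(x,y)=0\Rightarrow x=y$ I would use the length interpretation recorded before Theorem \ref{ free time minimizer equivalent to geodesic ray}: by the arithmetic--geometric mean inequality applied to the integrand, $A_h(\gamma)\geq\int\sqrt{2(h+U)}\,\|\dot\gamma\|\,dt$, so $\phi_h$ dominates the intrinsic $j_h$-length. Fixing $x\neq y$ and a radius $\rho<\|x-y\|$, every admissible curve must leave the closed mass-ball $\overline{B(x,\rho)}$. On that bounded set all mutual distances $r_{ij}$ are bounded above, so each term $m_im_j/r_{ij}$, and hence $U$, is bounded below by some $c>0$; this is precisely where the positivity of the potential is used, and it holds uniformly even when $x\in\Delta$, where $U$ is in fact large near $x$. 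The sub-arc of $\gamma$ from $x$ to the first exit of the ball has mass-length at least $\rho$, so its $j_h$-length is at least $\sqrt{2c}\,\rho>0$, forcing $\phi_h(x,y)>0$. This argument works for both $h=0$ and $h>0$; when $h>0$ one may alternatively invoke the cruder bound $j_h\geq 2h\,g_m$.

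For the reverse implication $\phi_h(x,x)=0$, the case $x\in\Omega$ is settled by the constant curve on $[0,T]$, whose action $T(U(x)+h)$ tends to $0$ as $T\to 0^+$. The delicate case is $x\in\Delta$, where the constant curve has infinite action; here I would exhibit small ejection--collision loops based at $x$ whose action tends to zero. Freezing the non-colliding bodies and restricting attention to the colliding cluster, a radial (zero angular momentum) Keplerian ejection--collision of amplitude $\delta$ admits the self-similar scaling $r=\delta\,\rho$, $t=\delta^{3/2}\sigma$, under which both the kinetic term and $\int U\,dt$ scale like $\delta^{1/2}$, while the contributions of $h$ and of the frozen bodies are $O(\delta^{3/2})$; letting $\delta\to 0$ gives $\phi_h(x,x)=0$. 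I expect this scaling estimate for loops through the singular set to be the main obstacle, since it is the only place where the incompleteness of $\Omega$ and the structure of collisions genuinely intervene, all other axioms being soft consequences of the non-negativity of the integrand and the Jacobi--Maupertuis length identity. As a shortcut I would also note that, once one accepts the quoted fact that $(E^N,\phi_h)$ is the metric completion of $(\Omega,d_h)$, the distance axioms are inherited automatically from those of $d_h$ together with the general principle that a completion of a metric space is again a metric space; I would present the direct verification as the primary route and mention this completion argument only as an alternative.
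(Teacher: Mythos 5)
Your proposal is correct, and on the one substantive axiom it takes a genuinely different route from the paper's proof. The paper proves the triangle inequality exactly as you do (concatenation of test curves), dismisses symmetry and $\phi_h(x,x)=0$ with ``it is not difficult to see'', and establishes non-degeneracy by a coordinatewise Cauchy--Schwarz estimate, $A_h(\gamma)\geq\frac{1}{2T}\sum_i m_i\vert x_i-y_i\vert^2$, a lower bound that depends on the transfer time $T$. You instead pass through the Jacobi--Maupertuis length: by the arithmetic--geometric mean inequality $A_h(\gamma)\geq\int\sqrt{2(h+U)}\,\|\dot\gamma\|\,dt$, and since $U\geq c>0$ on the bounded set $\overline{B(x,\rho)}$ while any competitor must cross that ball, you get $\phi_h(x,y)\geq\sqrt{2c}\,\rho>0$ uniformly in $T$. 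This buys something real: the paper's displayed inequality degenerates as $T\to\infty$, so for $h=0$ it does not by itself rule out competitors with arbitrarily long transfer times (for $h>0$ the extra term $hT$ repairs this), whereas your $T$-independent length bound settles both cases at once; the paper's estimate, in exchange, is a one-line computation whose kinetic-energy trick recurs elsewhere in the paper (e.g., in Proposition \ref{h-minimizer not superhyperbolic} and in the proof of Proposition \ref{Maderna: exist calibrating curve on [0,+infty)}). Your explicit treatment of $\phi_h(x,x)=0$ for $x\in\Delta$ via Kepler-scaled ejection--collision loops of amplitude $\delta$ and duration $\delta^{3/2}$, with action $O(\delta^{1/2})$, supplies precisely the point the paper leaves implicit, and your insistence on the Keplerian profile $\rho(\sigma)\approx\sigma^{2/3}$ near the endpoints is what keeps $\int U\,dt$ finite (a naive linear homothety would diverge logarithmically); a quicker route, once the triangle inequality is in hand, is $\phi_h(x,x)\leq\phi_h(x,y)+\phi_h(y,x)\leq 2\mu(\|x-y\|)\rightarrow 0$ as $y\rightarrow x$ by Theorem \ref{uniform bound for phi_hn}. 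You are also right to demote the completion-based shortcut to a remark: the statement that $(E^N,\phi_h)$ completes $(\Omega,d_h)$ is normally established after one knows $\phi_h$ is a distance, so taking it as the primary argument would risk circularity.
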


\begin{proof}
  We first prove that $ \phi_h $ satiesfies trianglular inequality. For any $x,y,z\in E^N$, any $\gamma_{1}\in  AC  (x,y)$ and $\gamma_{2}\in  AC  (y,x)$ defined in $[0,T_{1}]$ and $[0,T_{2}]$ respectively, we set 
\begin{equation}
  \gamma(t):=
  \begin{cases}
    \gamma_{1}(t) &0\leq t\leq T_{1}\\
    \gamma_{2}(t-T_{1}) &T_{1}\leq t\leq T_{1}+T_{2}
  \end{cases},
\end{equation}
thus \begin{equation}
   \phi_h (x,z)=\inf\left\{A_h (\eta)\mid\eta\in  AC  (x,z)\right\}\leq A_h (\gamma)\leq A_h (\gamma_{1})+A_h (\gamma_{2}).
\end{equation}
Since $\gamma_{1},\gamma_{2}$ are arbitrary, we have $ \phi_h (x,z)\leq \phi_h (x,y)+ \phi_h (y,z)$.

Second, we verify that $ \phi_h (x,y)=0$ makes $x=y$.
For any $x=(x_{1},\ldots,x_{N}),y=(y_{1},\ldots,y_{N})\in \Omega$ and $\gamma=(\gamma_{1},\ldots,\gamma_{N})\in  AC  (x,y)$ defined over $[0,T]$.
For any $1\leq i\leq N$,
\begin{equation*}
    \vert x_i -y_i\vert = \vert \gamma_i(T) - \gamma_i(0)\vert \leq \int_0^T\vert\dot{\gamma}_i\vert dt\leq\sqrt{T}\left( \int_0^T \vert \dot{\gamma}_i \vert^2 \right)^{1/2}.
\end{equation*}
Therefore 
\begin{equation*}
    A_h(\gamma)\geq \int_0^T \frac{1}{2}\|\dot{\gamma}\|^2 dt = \frac{1}{2} \sum m_i \int_0^T\vert \dot{\gamma}_i \vert^2 dt \geq \frac{1}{2T}\sum m_i\vert x_i - y_i \vert^2.
\end{equation*}
So eventually $ \phi_h (x,y)=0$ indicates $x=y$.

It is not difficult to see $ \phi_h (x,x)=0$ and $\phi_h(x,y) = \phi_h(y,x)$.
\end{proof}

We must mention an important estimate for $\phi_h(\cdot,\cdot)$ such that we can apply Ascoli's theorem to get our desired solution. 
This estimate is given by
Maderna \cite{maderna_2012}.
\begin{thm}\label{bound for phi(x,y,t)}(Maderna \cite{maderna_2012})
    For a Newtonian $N$-body system in a configuration space $E^{N}$, 
    the masses of the particles and the number $N$ determine two positive constants
    $C_1,C_2$ such that 
    \begin{equation}
        \phi(x,y,t)\leq C_1\frac{l^2}{t}+C_2\frac{t}{l}
    \end{equation}
    for any $x,y\in E^{N}$, any $l>\|x-y\|$ and $t>0$.
\end{thm}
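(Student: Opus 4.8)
The quantity $\phi(x,y,t)$ is an infimum of the action $A(\gamma)=\int_0^t\big(\tfrac12\|\dot\gamma\|^2+U(\gamma)\big)\,d\tau$ over curves $\gamma\in AC(x,y,t)$, so to bound it from above it suffices to exhibit a single competitor curve and estimate its action. The two terms on the right-hand side indicate what such a competitor should achieve: a kinetic contribution of size $l^2/t$ is produced by a path whose total displacement is of order $l$ traversed at roughly uniform speed $\sim l/t$, while a potential contribution of size $t/l$ is produced by keeping all mutual distances $\gtrsim l$ (so that $U\lesssim 1/l$) for essentially the whole time interval. The plan is therefore to construct a test path that first moves the particles apart to a configuration with mutual distances of order $l$, then transports them, and finally reassembles them, and to check that each phase costs at most $O(l^2/t)+O(t/l)$.

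The construction proceeds in three phases. Fix once and for all $N$ pairwise distinct unit vectors $u_1,\dots,u_N\in E=\mathbb{R}^d$ (possible since $d\ge 2$) and set $u=(u_1,\dots,u_N)$, with $\delta:=\min_{i\ne j}|u_i-u_j|>0$ depending only on $N,d$. In the first phase, over a subinterval of length $\tau_1\sim t$, I take the ejection $\gamma(\tau)=x+\varphi(\tau)\,u$ with the parabolic profile $\varphi(\tau)=l\,(\tau/\tau_1)^{2/3}$; for $\tau>0$ the $i$-th and $j$-th particles are separated by $\varphi(\tau)\,|u_i-u_j|\ge\delta\,\varphi(\tau)$, so $U(\gamma(\tau))\le \delta^{-1}\big(\sum_{i<j}m_im_j\big)\,\varphi(\tau)^{-1}$. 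A direct computation gives a kinetic cost $\sim l^2/\tau_1$ and a potential cost $\sim \tau_1/l$, both $O(l^2/t)+O(t/l)$ once $\tau_1\sim t$. The third phase is the analogous insertion into $y$, run in reverse, using the \emph{same} offset $l\,u$, so that the two ejected configurations $x+l\,u$ and $y+l\,u$ differ by exactly $x-y$, of norm $<l$. The middle phase is the straight transport between these two well-separated configurations over the remaining time; choosing the reference directions generically (again using $d\ge 2$ to avoid cancellations) keeps every mutual distance $\gtrsim l$ throughout, so its potential cost is $\lesssim t/l$ and, since the displacement is $\|x-y\|<l$, its kinetic cost is $\lesssim l^2/t$. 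Concatenating the three phases and invoking the sub-additivity of the fixed-time potential then yields the claimed bound.

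The main obstacle is precisely the behaviour at the two endpoints. Because the theorem must hold for \emph{every} $x,y\in E^N$, including configurations in the collision set $\Delta$ or arbitrarily close to it, a competitor traversing a straight segment at uniform speed out of $x$ would spend too long near the singularity and its potential integral $\int U\,d\tau$ would diverge logarithmically; the estimate would then blow up as $x$ approaches collision, destroying the uniformity in $x,y$ that the application to Ascoli's theorem requires. The remedy is the parabolic speed profile $\varphi\sim\tau^{2/3}$, which forces the particles to leave the singular region fast enough that $\int_0 U\,d\tau$ converges while the kinetic integral $\int_0\dot\varphi^2\,d\tau$ stays finite; verifying that both remain $O(l^2/t)+O(t/l)$ \emph{uniformly} in $x$ and $y$ is the crux of the argument. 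Finally, the scaling symmetry $\phi(\lambda x,\lambda y,\lambda^{3/2}t)=\lambda^{1/2}\phi(x,y,t)$ may be used at the outset to normalize $l=1$ (so that $\|x-y\|<1$), which streamlines the bookkeeping but does not remove the singular-endpoint difficulty.
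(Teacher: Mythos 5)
The paper does not actually prove this estimate --- it quotes it from Maderna \cite{maderna_2012} --- so your construction has to stand on its own, and it contains a genuine gap in the distance estimates of both the ejection and the transport phases. In phase~1 your curve is $\gamma(\tau)=x+\varphi(\tau)u$, so the separation of bodies $i$ and $j$ is $r_{ij}(\tau)=\vert x_i-x_j+\varphi(\tau)(u_i-u_j)\vert$, \emph{not} $\varphi(\tau)\vert u_i-u_j\vert$; your lower bound $r_{ij}\geq\delta\varphi(\tau)$ is valid only for pairs with $x_i=x_j$. Since $u$ is fixed once and for all while $x$ is arbitrary, take $x_i-x_j=-s(u_i-u_j)$ with $0<s\leq l$: then $r_{ij}(\tau)=\vert\varphi(\tau)-s\vert\,\vert u_i-u_j\vert$ vanishes at the interior time $\tau^*$ where $\varphi(\tau^*)=s$, the relative position crosses the collision transversally with speed bounded away from zero, and $\int U\,d\tau$ diverges logarithmically --- exactly the failure mode you introduced the $\tau^{2/3}$ profile to avoid, except that the profile only protects a collision at $\tau=0$, not collisions created at interior times by the interaction between the fixed offsets and the arbitrary initial configuration. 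Configurations near these produce finite but arbitrarily large potential integrals, destroying the uniformity of $C_1,C_2$ in $(x,y)$ that you yourself correctly identify as the crux. The middle phase has the same defect: along the straight segment from $x+lu$ to $y+lu$, the relative vector of a pair with $\vert x_i-x_j\vert=O(l)$ can pass arbitrarily close to $-l(u_i-u_j)$, and ``choosing the reference directions generically'' can rule out an exact collision for one given pair $(x,y)$ but cannot yield a bound $r_{ij}\geq c\,l$ with $c$ independent of $(x,y)$.

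The repair is to adapt the directions to the endpoints rather than fix them in advance. For instance, fix a unit vector $e\in E$, relabel so that $(x_1,e)\leq\cdots\leq(x_N,e)$, and eject with $u_i=i\,e$: then $(x_i-x_j,\,u_i-u_j)=(i-j)\bigl[(x_i,e)-(x_j,e)\bigr]\geq 0$ for every pair, hence $r_{ij}(\tau)^2\geq\vert x_i-x_j\vert^2+\varphi(\tau)^2\vert u_i-u_j\vert^2\geq\varphi(\tau)^2$, and $U(\gamma(\tau))\leq C\varphi(\tau)^{-1}$ holds uniformly in $x$; with $\varphi(\tau)=l(\tau/\tau_1)^{2/3}$ your computation of the $O(l^2/t)+O(t/l)$ cost then goes through verbatim. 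The price is that the orderings at $x$ and at $y$ need not agree, so the two ejected configurations no longer differ by $x-y$, and the middle phase now requires a genuine (if elementary) connection lemma: in $d\geq 2$, two configurations with all mutual distances $\geq c\,l$ lying at $g_m$-distance $O(l)$ from each other can be joined by a path of length $O(l)$ along which all mutual distances remain $\geq c'l$, with constants depending only on $N$ and the masses (e.g.\ permuting the order by moving bodies along arcs of radius $O(l)$, which is where $d\geq 2$ really enters). With that lemma, concatenation and the triangle-type subadditivity of $\phi$ give the theorem. Your scaling remark and your analysis of why a constant-speed ejection fails at a collision endpoint are both correct; the missing idea is that separation at scale $\varphi$ (resp.\ $l$) must be enforced for \emph{all} pairs along the \emph{whole} path, uniformly in $(x,y)$, and a configuration-independent choice of $u$ cannot achieve this.
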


The following theorem is about a uniform bound for a sequence of action potential, 
it is a tiny extension of  \cite[Theorem 2.11]{10.4007/annals.2020.192.2.5} .
\begin{thm}\label{uniform bound for phi_hn}
    If  $\{h_{n}\}$  is  a sequence of bounded non-negative energy constants. There exist constants $\alpha>0$ and $\beta> 0$ such that,  
    \begin{equation}
        \phi_{h_{n}}(x,y)\leq \mu(\|x-y\|),
    \end{equation}
    where $ \mu(\|x-y\|)  = \left(\alpha\|x-y\|+\beta\|x-y\|^2\right)^{1/2}$ and
 $\alpha,\beta$ only depend on the bound of $h_{n}$, the number of bodies $N$ and their masses.
\end{thm}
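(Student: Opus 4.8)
The plan is to reduce the uniform statement over the whole sequence $\{h_n\}$ to a single-energy estimate, and then obtain that estimate from Maderna's fixed-time bound (Theorem \ref{bound for phi(x,y,t)}) by a two-stage optimization. The only genuinely new ingredient compared with \cite[Theorem 2.11]{10.4007/annals.2020.192.2.5} is the uniformity in $n$, and this will come for free from the monotonicity of $\phi_h$ in the energy.

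First I would record the monotonicity of the free time potential in the energy constant. Since $A_h(\gamma) = A(\gamma) + hT$ for a curve $\gamma$ defined on $[0,T]$ with $T>0$, for $h_1 \le h_2$ one has $A_{h_1}(\gamma)\le A_{h_2}(\gamma)$ for every such $\gamma$, whence $\phi_{h_1}(x,y)\le \phi_{h_2}(x,y)$. As $\{h_n\}$ is bounded, set $H := \sup_n h_n < +\infty$; then $\phi_{h_n}(x,y)\le \phi_H(x,y)$ for all $n$, so it suffices to bound $\phi_H(x,y)$ by $\mu(\|x-y\|)$ with constants depending only on $H$, the number $N$ and the masses. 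Next I would rewrite $\phi_H$ as a fixed-time problem: grouping the curves of $AC(x,y)$ by their time length $T$ and using $A_H = A + HT$ gives
\begin{equation*}
    \phi_H(x,y) = \inf_{T>0}\bigl[\phi(x,y,T) + HT\bigr].
\end{equation*}
Into this I would insert Theorem \ref{bound for phi(x,y,t)}: for every $l>\|x-y\|$ and every $T>0$ we have $\phi(x,y,T)\le C_1 l^2/T + C_2 T/l$, so
\begin{equation*}
    \phi_H(x,y) \le \inf_{T>0}\Bigl[\frac{C_1 l^2}{T} + \Bigl(\frac{C_2}{l} + H\Bigr)T\Bigr].
\end{equation*}

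The remaining computation is routine. By the arithmetic-geometric mean inequality the inner infimum over $T$ equals $2\sqrt{C_1 l^2 (C_2/l + H)} = 2\sqrt{C_1 C_2\, l + C_1 H\, l^2}$, attained at $T = l\sqrt{C_1/(C_2/l + H)}$. Since the right-hand side is increasing in $l$, letting $l \downarrow \|x-y\|$ yields
\begin{equation*}
    \phi_H(x,y) \le \sqrt{4 C_1 C_2\, \|x-y\| + 4 C_1 H\, \|x-y\|^2},
\end{equation*}
which is exactly $\mu(\|x-y\|)$ with $\alpha = 4 C_1 C_2$ and $\beta = 4 C_1 H$. Because $C_1,C_2$ depend only on $N$ and the masses by Theorem \ref{bound for phi(x,y,t)} and $H$ is the bound on the sequence, $\alpha$ and $\beta$ depend only on the stated data, and combining with $\phi_{h_n}(x,y)\le\phi_H(x,y)$ gives the claim for every $n$.

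I do not expect a real obstacle: the substance of the estimate is Maderna's single-energy Theorem 2.11, and the extension to a bounded sequence is cheap once the monotonicity $h\mapsto\phi_h$ is in hand. The one point deserving care is the order of the two optimizations — optimizing over $T$ for each fixed admissible $l$ and only afterwards sending $l\downarrow\|x-y\|$ — together with the bookkeeping that verifies the final constants $\alpha,\beta$ never acquire dependence on $x$, $y$, or the particular index $n$.
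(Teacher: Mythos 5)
Your proof is correct and is essentially the paper's own argument: both insert Maderna's fixed-time bound into $\phi_{h}(x,y)=\inf_{t>0}\left[\phi(x,y,t)+ht\right]$, optimize over $t$ by the arithmetic-geometric mean inequality to get $\left(4C_1C_2\,l+4C_1 h\,l^2\right)^{1/2}$, and then let $l\downarrow\|x-y\|$. Your preliminary reduction via monotonicity of $h\mapsto\phi_h$ to the single energy $H=\sup_n h_n$ is only a cosmetic repackaging of the paper's step of bounding $h_n\leq M$ inside the same computation (yielding the identical constants $\alpha=4C_1C_2$, $\beta=4C_1M$).
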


\begin{proof}
    Suppose $h_n$ are bounded by a positive number $M$, since $\phi_{h_{n}}(x,y)=\inf_{t>0}\left\{\phi(x,y,t)+h_n t\right\}$, 
    and follows from Theorem \ref{bound for phi(x,y,t)} we first estimate the upper bound for $\phi+h_n t$:
    $$
    \phi(x,y,t)+h_n t\leq \frac{C_1l^2}{t}+(\frac{C_2}{l}+h_n)t,
    $$for all $t>0,l>\|x-y\|$,
    now consider the right handside of the above inequality as a function of $t$, we calculate its minimal value to be 
    $2\sqrt{C_1 l^2\left(\frac{C_2}{l}+h_n\right)}=\left( 4C_1 C_2 l +4C_1 h_n l^2 \right)^{1/2}$, thus if we
    let $\alpha=4C_1C_2, \beta = 4C_1 M $, then by $h_n\leq M$ we have 
    $$
    \phi_{h_n}(x,y)\leq \phi(x,y,t)+h_n t \leq \left(\alpha l + \beta l^2\right)^{1/2}
    $$ holds for all $l>\|x-y\|$, 
    then let $l\rightarrow \|x-y\|$ to justify our claim.
\end{proof}

\subsection{The calibrating curves}

The purpose of this subsection is to present certain minimizers which are called calibrating curves, they relate the action potentials and the viscosity subsolutions of
the corresponding Hamilton-Jacobi equation.
  Proposition \ref{Maderna: exist calibrating curve on [0,+infty)} claim the existence of such curves, it was proved by Maderna and Venturelli  \cite[Theorem 3.2, p525]{10.4007/annals.2020.192.2.5}
for $h>0$,  the paper  \cite{10.4007/annals.2020.192.2.5} implies that the result also holds in case of zero energy. For convenience, we present here the proof of Proposition
 \ref{Maderna: exist calibrating curve on [0,+infty)} and owe it to Maderna and Venturelli.
Calibrating curves is quite essential through out of our paper.

\begin{defi}
    A absolutely continuous curve $\gamma:I \rightarrow E^N$ in some interval $I$ is called a calibrating curve of a function $u\in C(E^N)$ if 
    \begin{equation}
        u\left(\gamma(t_2)\right) - u\left(\gamma(t_1)\right) = A_h\left(\gamma\big\vert_{[t_1,t_2]}\right)
    \end{equation}
    for any closed compact subinterval $[t_1,t_2]\subset I$. We also call $\gamma$ $h$-calibrates $u$ in $I$.
\end{defi}

Before proving the proposition, we need 
von Zeipel's theorem, see von Zeipel \cite{von1908singularites} and McGehee \cite{zbMATH04009869}.
\begin{thm}\label{von Zeipel}
 If the solution $x(t)$ has a singularity at $t^*$, 
 then $\lim_{t\rightarrow t^*} I(x(t)) = I^*$ exists and lies in $[0, +\infty]$.
 In particular, in the case of  $\lim_{t\rightarrow t^*}I(x(t)) = I^*<+\infty$, 
 there exists a $x^* \in \Delta$ such that $\lim_{t \rightarrow t^*}x(t) = x^*$, i.e., $x(t)$ suffers a collision.
\end{thm}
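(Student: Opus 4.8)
The plan is to prove the two assertions separately, and to do so I would first invoke Painlev\'e's classical theorem: since the solution cannot be prolonged past the singularity $t^*$, one has $\min_{i<j} r_{ij}(t)\to 0$, equivalently $U(x(t))\to +\infty$, as $t\to t^*$. Differentiating $I=\langle x,x\rangle$ twice, using the equations of motion \eqref{Newton motion equation} and the homogeneity of degree $-1$ of $U$ (so that Euler's relation gives $\langle x,\ddot x\rangle=\sum_i x_i\cdot\nabla_{x_i}U=-U$), together with the energy relation $\|\dot x\|^2=2(h+U)$, I obtain the Lagrange--Jacobi identity $\ddot I = 2\|\dot x\|^2-2U = 4h+2U$. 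Since $h$ is constant and $U\to+\infty$, this forces $\ddot I\to+\infty$, hence $\ddot I>0$ on some interval $[t_0,t^*)$.

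Consequently $\dot I$ is strictly increasing on $[t_0,t^*)$, so its limit $D:=\lim_{t\to t^*}\dot I$ exists in $(-\infty,+\infty]$. If $D\le 0$ then $\dot I<0$ throughout $[t_0,t^*)$ and $I$ is decreasing there; if $D>0$ then $\dot I>0$ near $t^*$ and $I$ is increasing there. In either case $I$ is eventually monotone and bounded below by $0$, so $I^*=\lim_{t\to t^*} I(x(t))$ exists in $[0,+\infty]$; this proves the first assertion. (The excluded value $I^*=+\infty$ is exactly the non-collision, super-hyperbolic type of singularity.)

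For the second assertion, suppose $I^*<+\infty$. Then $x(t)$ remains bounded and therefore has cluster points as $t\to t^*$; since $U\to+\infty$ while $U$ is finite and continuous on $\Omega$, every such cluster point lies in $\Delta$. It remains only to promote boundedness to genuine convergence $x(t)\to x^*\in\Delta$. I would first settle the subcase $D<+\infty$: here $\int_{t_0}^{t^*}\ddot I\,dt=D-\dot I(t_0)<+\infty$, and since $\|\dot x\|^2=\ddot I-2h$ this gives $\int_{t_0}^{t^*}\|\dot x\|^2\,dt<+\infty$; by Cauchy--Schwarz over the finite interval $[t_0,t^*)$ the arc length $\int_{t_0}^{t^*}\|\dot x\|\,dt$ is finite, so $x(t)$ is Cauchy and converges to a limit, which necessarily lies in $\Delta$.

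The main obstacle is the remaining subcase $D=+\infty$, where $\dot I\to+\infty$ while $I$ stays bounded; then $\int^{t^*}\|\dot x\|^2\,dt=+\infty$, the arc length may be infinite, and the configuration could a priori oscillate among distinct points of $\Delta$ without settling. Ruling this out is the genuinely nontrivial core of von Zeipel's theorem. The strategy I would follow is to partition the bodies into clusters according to which mutual distances tend to $0$, apply the Lagrange--Jacobi identity to the internal moment of inertia of each colliding cluster (which tends to $0$), and establish Sundman--Sperling type asymptotics $r_{ij}\sim (t^*-t)^{2/3}$ for colliding pairs. These show that each cluster's relative motion has a definite limit shape and that the blow-up of $\dot I$ comes only from internal relative velocities, whose contribution to $\dot I$ cancels through the cluster constraint $\sum_i m_i\dot\xi_i=0$; this pins down the cluster centers and forces $x(t)$ to converge. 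I expect this clustering-and-asymptotics analysis, carried out rigorously by McGehee, to be by far the most demanding step.
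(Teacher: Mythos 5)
The paper does not actually prove Theorem \ref{von Zeipel}: it imports the statement wholesale, with citations to von Zeipel \cite{von1908singularites} and McGehee \cite{zbMATH04009869}, so there is no internal proof to measure you against. Judged on its own terms, your attempt is correct and complete for everything it actually argues. The Lagrange--Jacobi identity $\ddot I = 2\|\dot x\|^2-2U = 4h+2U$ is computed correctly from Euler's relation and the energy integral; Painlev\'e's theorem does give $r(t)\to 0$, hence $U\to+\infty$ and eventual convexity of $I$; the monotonicity of $\dot I$ then yields the existence of $I^*\in[0,+\infty]$, which settles the first assertion. Your treatment of the subcase $D=\lim\dot I<+\infty$ is also a complete proof: $\int_{t_0}^{t^*}\|\dot x\|^2\,dt = \bigl(D-\dot I(t_0)\bigr)-2h(t^*-t_0)<+\infty$, Cauchy--Schwarz gives finite arc length, $x(t)$ converges, and the limit must lie in $\Delta$ because $U$ is continuous and finite on $\Omega$ while $U(x(t))\to+\infty$.

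The genuine gap is the one you yourself flag: the subcase $I^*<+\infty$ with $D=+\infty$, where $\dot I$ (hence $\|\dot x\|$) blows up while $I$ stays bounded and the configuration could a priori wander along $\Delta$ without converging. This is not a residual technicality --- it is the entire content of von Zeipel's theorem; everything before it is classical Sundman-era material. Your sketch for this case also has an internal circularity worth noting: Sundman--Sperling asymptotics of the form $r_{ij}\sim(t^*-t)^{2/3}$ for colliding pairs are normally \emph{derived after} one knows the singularity is a collision with a fixed cluster structure, whereas here the cluster decomposition (which pairs tend to zero, and whether that partition stabilizes as $t\to t^*$) is exactly what oscillation could destroy. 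The standard rigorous route, in McGehee \cite{zbMATH04009869}, is different in shape from your sketch: one shows the limit set of $x(t)$ as $t\to t^*$ is a nonempty compact connected subset of the bounded region, observes it lies in $\Delta$, and then uses a Tauberian-type estimate near the collision set to show a connected limit set must be a single point. So as a standalone proof your proposal is incomplete at its crux; as a treatment parallel to the paper's, you have in fact proved strictly more than the paper does, and the outstanding debt coincides with the very citation the paper makes.
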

In the case of $I^* = +\infty$, the particles of $x(t)$ is called experiencing a pseudo-collison.

\begin{prop}\label{Maderna: exist calibrating curve on [0,+infty)}
    For $h_n\geq 0$ and $h_n\rightarrow h\geq 0$, if $u\in C^0(E^N)$ and $u(x)=\lim_{n\rightarrow\infty}\left(\phi_{h_n}(0,p_n) - \phi_{h_n}(x,p_n)\right)$ 
    for some sequence $p_n\in E^{N}$ 
    with $\|p_n\|\rightarrow+\infty$. 
    Then for any initial configuration $x\in E^N$, there exist a calibrating curve $\gamma$  defined over $[0,+\infty)$ with $\gamma(0)=x$.
    i.e., 
    \begin{equation*}
        u(\gamma(t)) - u(x)= A_h \left(\gamma\big\vert_{[0,t]}\right)
    \end{equation*}for all $t>0$.
\end{prop}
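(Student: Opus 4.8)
The plan is to treat $u$ as a dominated function of Busemann type and to produce the calibrating curve as a locally uniform limit of free time minimizers aimed at the escaping sequence $p_n$. The first observation is that $u$ is \emph{dominated} by $\phi_h$. Writing $u_n(z)=\phi_{h_n}(0,p_n)-\phi_{h_n}(z,p_n)$, the triangle inequality gives $u_n(y)-u_n(x)=\phi_{h_n}(x,p_n)-\phi_{h_n}(y,p_n)\le\phi_{h_n}(x,y)$, and since $\phi_{h_n}(x,y)\le\phi(x,y,t)+h_n t$ for every $t>0$, letting $n\to\infty$ yields $u(y)-u(x)\le\phi_h(x,y)$ for all $x,y\in E^N$. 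Because the restriction of any admissible curve is an upper bound for $\phi_h$, the desired calibration of $u$ along $\gamma$ is exactly the assertion that equality holds in this domination, and this is the target identity I will verify at the end.

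Next I would set up the approximation. For each $n$, Theorem \ref{free time minimizer of phi_h exist} provides a free time minimizer $\gamma_n:[0,T_n]\to E^N$ of $A_{h_n}$ from $x$ to $p_n$, so $\gamma_n(0)=x$. Since restrictions of free time minimizers are again free time minimizers, the additivity $A_{h_n}(\gamma_n|_{[t_1,T_n]})=\phi_{h_n}(\gamma_n(t_1),p_n)$ telescopes to the exact calibration $u_n(\gamma_n(t_2))-u_n(\gamma_n(t_1))=A_{h_n}(\gamma_n|_{[t_1,t_2]})$ for $0\le t_1\le t_2\le T_n$. Moreover Theorem \ref{uniform bound for phi_hn} gives $|u_n(z)-u_n(w)|\le\mu(\|z-w\|)$ with a modulus $\mu$ independent of $n$, so $\{u_n\}$ is equicontinuous; combined with the pointwise convergence $u_n\to u$ assumed in the hypothesis, this upgrades to locally uniform convergence on $E^N$.

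The compactness step comes next. First I would check $T_n\to+\infty$: if $T_n$ stayed bounded, Cauchy--Schwarz would force $A_{h_n}(\gamma_n)\ge\|p_n-x\|^2/(2T_n)$ to grow superlinearly in $\|p_n\|$, contradicting the at most linear growth of the $\mu$-bound on $\phi_{h_n}(x,p_n)=A_{h_n}(\gamma_n)$. Then, on each fixed interval $[0,t_0]$, eventually contained in $[0,T_n]$, I would establish a uniform bound on $\gamma_n$ and on its action, so that Ascoli's theorem and a diagonal extraction produce a subsequence converging uniformly on compact sets to a curve $\gamma:[0,+\infty)\to E^N$ with $\gamma(0)=x$. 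This local compactness is the main obstacle: the Newtonian potential is singular, so the speed $\|\dot\gamma_n\|=\sqrt{2(h_n+U(\gamma_n))}$ can be large near near-collisions and, when $h=0$, is not bounded below, so one cannot merely quote a Lipschitz bound. I would control it through a priori estimates on minimizers together with the at most linear spatial growth recorded in Theorem \ref{free time minmizer with h geq 0, all r_ij tend to infinity} and the equivalence of Theorem \ref{ free time minimizer equivalent to geodesic ray}, taking care that the estimates remain uniform as $h_n\to h$, including the degenerate energy $h=0$ and the possibility $x\in\Delta$.

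Finally I would pass to the limit in the calibration identity. Its left-hand side converges to $u(\gamma(t_2))-u(\gamma(t_1))$ by the locally uniform convergence of both $u_n$ and $\gamma_n$, while lower semicontinuity of the action (convexity in the velocity for the kinetic term, Fatou's lemma for the nonnegative potential, and $h_n\to h$ for the energy term) gives $\liminf_n A_{h_n}(\gamma_n|_{[t_1,t_2]})\ge A_h(\gamma|_{[t_1,t_2]})$. Hence $u(\gamma(t_2))-u(\gamma(t_1))\ge A_h(\gamma|_{[t_1,t_2]})$, and combined with the domination inequality $u(\gamma(t_2))-u(\gamma(t_1))\le\phi_h(\gamma(t_1),\gamma(t_2))\le A_h(\gamma|_{[t_1,t_2]})$ this sandwiches the two quantities into equality. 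Since $t_1,t_2$ are arbitrary, $\gamma$ calibrates $u$ on $[0,+\infty)$, which is the assertion.
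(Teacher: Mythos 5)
Your overall architecture is viable and genuinely different from the paper's. The paper never takes a limit of the whole minimizing segments $\gamma_n$: it first shows that for each radius $r>0$ there is a point $y_r$ with $\|y_r-x\|=r$ and $u(y_r)-u(x)=\phi_h(x,y_r)$, by tracking only the exit points $y_n=\gamma_n(\tau_n)$ of $\gamma_n$ from the ball $B(x,r)$ and using the uniform modulus $\mu$ of Theorem \ref{uniform bound for phi_hn} to pass to the limit in the calibration identity at those points; it then takes a free time minimizer from $x$ to $y_r$ (Theorem \ref{free time minimizer of phi_h exist}) as a short calibrating arc, invokes Zorn's lemma to obtain a maximal calibrating curve on $[0,t^*)$, and excludes $t^*<+\infty$ via von Zeipel's theorem (Theorem \ref{von Zeipel}): a singularity with $I^*<+\infty$ is a collision at a finite position, from which a fresh calibrating arc can be concatenated, contradicting maximality, while $I^*=+\infty$ in finite time is killed by the estimate $\|x_n-x\|^2/(2t_n)+ht_n\leq\mu(\|x_n-x\|)$, whose left side is quadratic and right side linear in $\|x_n-x\|$. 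Your route trades Zorn plus von Zeipel for compactness of the segments themselves plus the lower-semicontinuity/domination sandwich; your domination step, the telescoped calibration of $u_n$ along $\gamma_n$, the proof that $T_n\to+\infty$, and the final sandwich are all correct as stated.

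The one genuine gap is the compactness step, and the tools you name for it would not carry the weight: Theorem \ref{free time minmizer with h geq 0, all r_ij tend to infinity} and Theorem \ref{ free time minimizer equivalent to geodesic ray} concern geodesic rays already defined on all of $[0,+\infty)$, not the finite arcs $\gamma_n$, and no pointwise bound on the speed $\|\dot\gamma_n\|=(2(h_n+U(\gamma_n)))^{1/2}$ can hold uniformly, since minimizing segments may pass arbitrarily close to $\Delta$. Pointwise control is, however, unnecessary; integral control suffices and follows from estimates already in the paper. By Cauchy--Schwarz and calibration, for $t\leq t_0$ one has $\|\gamma_n(t)-x\|^2/(2t)\leq A(\gamma_n\vert_{[0,t]})\leq A_{h_n}(\gamma_n\vert_{[0,t]})=\phi_{h_n}(x,\gamma_n(t))\leq\mu(\|\gamma_n(t)-x\|)$; since $\mu(s)=(\alpha s+\beta s^2)^{1/2}$ grows linearly, this yields a uniform position bound $\|\gamma_n(t)-x\|\leq C(t_0)$ on $[0,t_0]$, whence $\int_0^{t_0}\|\dot\gamma_n\|^2\,dt\leq 2\mu(C(t_0))$, giving the uniform H\"older estimate $\|\gamma_n(t)-\gamma_n(s)\|\leq(2\mu(C(t_0)))^{1/2}\vert t-s\vert^{1/2}$. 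Ascoli plus weak $H^1$ compactness and a diagonal extraction then produce an absolutely continuous limit $\gamma$ on $[0,+\infty)$, on which your Fatou argument (note $U$ is lower semicontinuous as a map into $(0,+\infty]$) and weak lower semicontinuity of the kinetic term justify $\liminf_n A_{h_n}(\gamma_n\vert_{[t_1,t_2]})\geq A_h(\gamma\vert_{[t_1,t_2]})$, closing the sandwich exactly as you describe. With this substitution your proof is complete; as written, the compactness paragraph is a placeholder resting on inapplicable theorems.
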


\begin{proof}
    We first show that for any $r>0$ there exist some $y_r\in E^N$ and a curve $\gamma_r\in AC (x,y_r)$ defined in a finite interval, 
    such that $\|x-y_r\|=r$ and $u(y_r) - u(x)= A_h (\gamma_r)$.
    
    Denote $\phi_{h_n}(\cdot,p_n)$ by $v_n(\cdot)$ as a function  in $E^N$, then it is continuous since $\phi_{h_n}$ is a distance.
    According to Theorem \ref{free time minimizer of phi_h exist}, the action $\phi_{h_n}(x,p_n)$ attains its minimum in $ AC (x,p_n)$ 
    at some $\gamma_n$ defined over $[0,t_n]$ with $\gamma_n(t_n) = p_n$, then for any $t\in[0, t_n]$,
    \begin{equation*}
        v_n(\gamma(t))-v_n(p_n) = \phi_{h_n}(\gamma(t),p_n)= A_{h_n} \left(\gamma_n\vert_{[t,t_n]}\right),
    \end{equation*}
    which implies that for any subinterval $[t_1, t_2]\subset [0, t_n)$, 
    \begin{align*}
        &v_n(\gamma(t_1)) - v_n(\gamma(t_2)) \\
        = &\big(v_n(p_n)-v_n(\gamma(t_2))\big) - \big(v_n(p_n)-v_n(\gamma(t_1))\big)    \\
        = & -A_{h_n} \left(\gamma\vert_{[t_2,t_n]}\right) + A_{h_n} \left(\gamma\vert_{[t_1,t_n]}\right)\\
        = & A_{h_n} \left(\gamma\vert_{[t_1,t_2]}\right).
    \end{align*}
    Now because $\|p_n\|\rightarrow+\infty$, suppose $\|p_n-x\|>r$, then for each $n$, there is a $y_n=\gamma(\tau_n)$ for some $\tau_n$  such that $\|y_n-x\|=r$, 
    thus we assume $y_n\rightarrow y_r$ for some  $y_r\in\partial B(x,r)$.
    Since $\gamma_n$  is a calibrating curve, the following equation holds,
    \begin{equation*}
     v_n(x) - v_n(y_n)=  A_{h_n} \left(\gamma_n\big\vert_{[0,\tau_n]}\right) =\phi_{h_n}(x,y_n).
    \end{equation*}   
    On the otherhand, since $\phi_{h_n}$ is distance, 
    then $\phi_{h_n}(y_n,p_n)\leq\phi_{h_n}(y_n,y_r)+\phi_{h_n}(y_r,p_n)$ and $\phi_{h_n}(y_r,p_n)\leq\phi_{h_n}(p_n,y_n)+\phi_{h_n}(y_n,y_r)$
    imlplies $\vert v_n(y_n)-v_n(y_r)\vert\leq\phi_{h_n}(y_r,y_n)$, then
    we apply Theorem \ref{uniform bound for phi_hn} to have $\vert v_{p_n}(y_n)-v_{p_n}(y_r)\vert \leq\mu(\|y_n-y_r\|)\rightarrow 0$ as $n\rightarrow\infty$.
 
    On the otherhand, since $\phi_{h_n},\phi_h$ are distances in $E^N$ induced by comformal metrics and $h_n\rightarrow h$, thus $\lim_n\phi_{h_n}(x,y_r) = \phi_h(x,y_r)$,
    and by the Theorem \ref{uniform bound for phi_hn}, we have 
    \begin{equation}
        |\phi_{h_n}(x,y_r) - \phi_{h_n}(x,y_n)| \leq\phi_{h_n}(y_r,y_n) \leq\mu\|y_r-y_n\|\rightarrow 0,
    \end{equation}    
    and then
    \begin{equation}
        |\phi_h(x,y_r) - \phi_{h_n}(x,y_n)|\leq|\phi_h(x,y_r) -\phi_{h_n}(x,y_r)| + |\phi_{h_n}(x,y_r) - \phi_{h_n}(x,y_n)|\rightarrow 0,
    \end{equation}
   i.e., $\lim_n\phi_{h_n}(x,y_n) = \phi_h(x,y_r)$.
    Therefore
 \begin{align*}
     u(y_r) - u(x)&=\lim_n\bigl(v_n(x) - v_n(y_r)\bigr)\\
              &=\lim_n\bigl(v_n(x) - v_n(y_n)+v_n(y_n) - v_n(y_r)\bigr)\\
              &=\lim_n\bigl(\phi_{h_n}(x,y_n)+v_n(y_n)-v_n(y_r)\bigr)\\
              &=\phi_h(x, y_r),
 \end{align*}
 we again apply Lemma \ref{free time minimizer of phi_h exist} to take a minimizer $\gamma_r\in AC (x,y_r)$ 
 such that $ A_h (\gamma_r) = \phi_h(x,y_r) = u(y_r) - u(x)$.
 
 Now we apply Zorn's Lemma to have a maximal calibrating curve $\gamma: [0. t^*)\rightarrow E^N$, then we try to prove $t^* = +\infty$.
 We argue by contradiction. Let $t^*\neq+\infty$, $\gamma$ is a free time minimizer thus it is a true motion in $[0, t^*)$, 
 the maximal property of $t^*$ implies it is a singularity. 
 Applying von Zeipel's theorem \ref{von Zeipel} gives $I(\gamma(t))\rightarrow I^*\in[0,+\infty]$,
 If $I^*<+\infty$, Zeipel's theorem tells us $\lim_{t\rightarrow t^*}\gamma(t)$ exists in finite position, 
 then we pick another calibrating curve $\eta$ defined in some $[t^*, t^*+\sigma)$ $(\sigma>0)$ with $\eta(t^*) = \lim_{t\rightarrow t^*}\gamma(t)$, 
 concatenating $\gamma, \eta$ produces a new calibrating curve defined in $[0,t^*+\sigma)$, a contradiction.
 If $I^* = +\infty$, we choose a sequence $x_n = \gamma(t_n), (t_n\rightarrow t^{*-})$ such that $\|x_n - x\|\rightarrow+\infty$ 
 and denote $A_n =  A_h \left(\gamma\big\vert_{[0,t_n]}\right)$, then we have 
 \begin{equation}
     \|x_n - x\|^2 =\left\|\int_0^{t_n} \dot{\gamma} dt\right\|^2 = \left\|\int_0^{t_n} 1\cdot\dot{\gamma} dt\right\|^2\leq t_n\int_0^{t_n}\|\dot{\gamma}\|^2dt\leq 2t_n A_n,
 \end{equation}
 and since $\gamma$ minimizes $ A_h $, we have 
 \begin{align*}
     \frac{\|x_n - x\|^2}{2t_n} + ht_n 
     &\leq A_n + ht_n 
     = A_h (\gamma\big\vert_{[0,t_n]}) 
     =\phi(x,x_n)\\
     &\leq\left(\alpha\|x-x_n\|+\beta\|x-x_n\|^2\right)^{1/2},
 \end{align*}
 this is a contradiction since when $\|x_n - x\|$ approaching infinity, the left handside has order $O(\|x_n - x\|^2)$ 
 but is governed by the right handside whose order is $O(\|x_n - x\|)$.
 
 \end{proof}

 \begin{rem}\label{Busemann functions exist}
    The function $u$ defined in Theorem \ref{Maderna: exist calibrating curve on [0,+infty)} is called Buseman function,
    denote by $\mathcal{B}_h$ the space of these functions, like Maderna and Venturelli \cite{10.4007/annals.2020.192.2.5}. 
    They are also verified in  \cite{10.4007/annals.2020.192.2.5} to be viscosity subsolutions of the Hamilton-Jacobi equations \ref{HJ equation}. 
    We will show that it is non-empty in the proof of Theorem \ref{The set of geodesic rays is closed}. Then we can say that there exists 
    a calibrating curve starting with any prescribed initial position $x_0\in E^N$ and energy $h\geq 0$.
   \end{rem}

   \subsection{Superhyperbolic motions}\label{Superhyperbolic motions}
   
   We need to introduce an important classification of motions of $N$-body problem made by Marchal and Saari \cite{MARCHAL1976150}, 
   which helps us to achieve our main result.
   We wirte $R(t) = \max_{i<j}r_{ij}(t)$ and $r(t) = \min_{i<j}r_{ij}(t)$ which represent respectively the maximal and minimal distances between the $N$ bodies. 
   \begin{thm}\label{Classification of motions by Marchal}
       Suppose $\gamma:[0,+\infty)\rightarrow \Omega$ is an arbitrary solution of the Newton's equation, then its motion,
        no matter the sign of its energy, can only be one of the two cases:
       \begin{enumerate}
           \item $R(t)/t\rightarrow+\infty$ and $r(t)\rightarrow 0$ as $t\rightarrow+\infty$.
           \item $\gamma(t) = at + O(t^{2/3})$ for some $a\in E^N$. Here $O(t^{2/3})$ represents a term bounded by $Ct^{2/3}$ for some constant $C$.
       \end{enumerate}
   \end{thm}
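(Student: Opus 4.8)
The plan is to follow the classical line of Sundman, Pollard and Saari, using the moment of inertia $I(t)$ and the maximal distance $R(t)$ as the two organising quantities. First I would reduce to the center-of-mass frame: since $\sum_i \nabla_{x_i}U = 0$, the center of mass $G(t)=M^{-1}\sum_i m_i x_i$ satisfies $\ddot G=0$ and so moves linearly, $G(t)=v_G t + G_0$. The distances $r_{ij}$ and hence $R(t),r(t)$ are invariant under this translation, and the target form $\gamma(t)=at+O(t^{2/3})$ is stable under adding the affine term $G(t)$. I may therefore assume $G\equiv 0$, in which case $I^{1/2}(t)=\|x(t)\|$ is comparable to $R(t)$, both being equivalent to $\max_i|x_i(t)|$ up to constants depending only on the masses.

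The analytic backbone is the Lagrange--Jacobi identity. Differentiating $I=\langle x,x\rangle$ twice, using Euler's relation $\langle x,\nabla U\rangle=-U$ (as $U$ is homogeneous of degree $-1$) together with $T=\tfrac12\|\dot x\|^2=h+U$, gives
\begin{equation*}
\ddot I(t)=2U(x(t))+4h.
\end{equation*}
Since $U>0$ on $\Omega$, this controls the convexity of $I$, and combined with the Sundman-type inequality $(\dot I)^2\le 8I\,(h+U)$ and its refinement involving the angular momentum $c$, it constrains the admissible growth rates of $I$. Integrating the identity twice expresses $I(t)/t^2$ as $2h$ plus a double time-average of $U$, so the growth of $I$ is governed entirely by how large $U$ is on average, i.e.\ by how small the minimal distance $r$ is. I stress that the sign of $h$ is not used to trivialise anything here; the regime $h<0$ is the genuinely delicate one and is precisely why the general identity is needed.

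The dichotomy I would phrase through the limit $\ell:=\lim_{t\to\infty}R(t)/t\in[0,+\infty]$, whose existence is the first real step: using the identity above and the integrability properties of $U$ along the motion, one rules out oscillation of $R/t$ between finite and infinite scales. If $\ell=+\infty$, then $I/t^2\gtrsim (R/t)^2\to\infty$, which by the twice-integrated identity forces $\int_{t_0}^t U$ to grow faster than linearly; since $U\le C/r$, this cannot happen while $r$ stays bounded below, and a sharpening of the estimate upgrades this to the full limit $r(t)\to 0$, placing the motion in case (1). If $\ell<\infty$, then $R=O(t)$, all mutual distances are $O(t)$, and a further analysis shows each velocity $\dot x_i$ converges to a limit $a_i\in E$; integrating the velocity error, which decays like $O(t^{-1/3})$ in the critical parabolic regime, then yields $x_i(t)=a_it+O(t^{2/3})$, i.e.\ case (2).

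The hard part will be the two quantitative estimates underpinning the split. The first is proving that $R/t$ genuinely \emph{converges} rather than merely having distinct $\liminf$ and $\limsup$, which is where Saari's more delicate monotonicity arguments for the super-expanding case enter. The second is the sharp $O(t^{2/3})$ bound in the tame case: even when the outer scale grows only linearly, the minimal distance $r$ may still undergo recurrent near-collisions, so the naive bound $\|\ddot x_i\|=O(t^{-2})$ is not available uniformly, and one must instead control the close-pair contributions to $\int^\infty U$ and to the total variation of the velocities. I expect this error control amid possible near-collisions to be the main obstacle; the exponent $2/3$ is critical and arises exactly from integrating the parabolic decay $\|\dot x-a\|=O(t^{-1/3})$.
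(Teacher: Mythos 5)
First, a structural point: the paper does not prove this theorem at all --- it is imported from Marchal and Saari \cite{MARCHAL1976150}, with Pollard \cite{10.2307/24901766} supplying the equivalence $R/t\to+\infty \Leftrightarrow r\to 0$, so there is no in-paper argument to compare against; your proposal must stand on its own as a proof of the Marchal--Saari theorem. Your preliminary framework is the correct classical one and is sound as far as it goes: the center-of-mass reduction, the Lagrange--Jacobi identity $\ddot I = 2U+4h$, the comparability $I^{1/2}\approx R$ in the barycentric frame, Sundman's inequality $(\dot I)^2\le 8I(h+U)$, and the dichotomy organized around $R/t$ are exactly the ingredients of the literature proof.

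However, there is a step in your tame case that is not merely incomplete but would fail. You claim that when $R=O(t)$, ``each velocity $\dot x_i$ converges to a limit $a_i$'' with error $O(t^{-1/3})$, and you obtain the $O(t^{2/3})$ remainder by integrating that error. This is false for motions the theorem must cover: a bounded Keplerian binary ($h<0$ two-body motion) lies in case (2) with $a=0$ and $x(t)=O(1)$, yet its individual velocities oscillate forever and never converge; the Sitnikov oscillatory motions --- which this very paper notes belong to case (2), with mutual distances bounded by $Ct^{2/3}$ --- likewise have non-convergent, unbounded velocities. In general the $a_i$ are only Ces\`aro limits $x_i(t)/t\to a_i$ (equal for all members of a bounded or oscillating cluster), not limits of $\dot x_i$, and the exponent $2/3$ does not arise from a pointwise velocity decay: Marchal and Saari obtain it by decomposing the system into subsystems whose mass centers have convergent velocities and then bounding each subsystem's \emph{internal} moment of inertia via Sundman-type estimates; that cluster decomposition is the substance of \cite{MARCHAL1976150} and is absent from your plan. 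Finally, the two steps you explicitly defer --- ruling out oscillation of $R/t$ between finite and infinite scales, and upgrading $\liminf r=0$ to the full limit $r(t)\to 0$ (Pollard's theorem) --- are not refinements to be filled in later; together with the cluster analysis they \emph{are} the theorem, so the proposal as written establishes only the easy reductions.
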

   
   We need to mention that Pollard \cite{10.2307/24901766} proved $R(t)/t\rightarrow+\infty$ if and only if $r(t)\rightarrow 0$ as $t\rightarrow+\infty$,
   therefore the first case of the above theorem is valid and is called a superhyperbolic motion.
   
   The following proposition asserts that $\gamma$ is not superhyperbolic in some case, we owe it to  \cite{10.4007/annals.2020.192.2.5}.
   
    \begin{prop}\label{h-minimizer not superhyperbolic}
       For $h\geq 0$, a free time minimizer $\gamma$ of $A_h$ is not superhyperbolic
   \end{prop}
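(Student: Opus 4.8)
The plan is to exploit the minimizing property of $\gamma$ to trap its action between a universal upper bound coming from Theorem \ref{uniform bound for phi_hn} and a Cauchy--Schwarz lower bound, and then to read off that the displacement of $\gamma$, and hence the maximal mutual distance $R(t)$, can grow at most linearly in $t$; this alone excludes superhyperbolicity by the dichotomy in Theorem \ref{Classification of motions by Marchal}. Since $\gamma$ is a free time minimizer on $[0,+\infty)$, each restriction $\gamma|_{[0,t]}$ minimizes $A_h$ over $AC(\gamma(0),\gamma(t))$, so that $A_h(\gamma|_{[0,t]}) = \phi_h(\gamma(0),\gamma(t))$ for every $t>0$; moreover $\gamma$ is a genuine solution on $(0,+\infty)$, so the classification applies to it. Throughout I write $D(t) = \|\gamma(t)-\gamma(0)\|$.

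First I would establish the lower bound. Because $U\geq 0$ and $h\geq 0$, the integrand of $A_h$ dominates $\tfrac12\|\dot\gamma\|^2$, and Cauchy--Schwarz gives $D(t)^2 = \bigl\|\int_0^t \dot\gamma\,d\tau\bigr\|^2 \leq t\int_0^t\|\dot\gamma\|^2\,d\tau$. Hence
\[
 \frac{D(t)^2}{2t} \;\leq\; A_h(\gamma|_{[0,t]}) \;=\; \phi_h(\gamma(0),\gamma(t)) \;\leq\; \mu\bigl(D(t)\bigr) \;=\; \bigl(\alpha D(t)+\beta D(t)^2\bigr)^{1/2},
\]
where the last inequality is Theorem \ref{uniform bound for phi_hn} applied to the constant energy $h$. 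For $D(t)$ large the right-hand side is $O(D(t))$, so squaring and comparing orders forces $D(t)\leq C\,t$ for some constant $C$ depending only on the masses, $N$ and $h$; that is, the mass-norm displacement of $\gamma$ grows at most linearly in $t$.

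It then remains to convert this into a bound on $R(t) = \max_{i<j} r_{ij}(t)$. For each body $m_k|x_k(t)-x_k(0)|^2 \leq D(t)^2$, so $|x_k(t)-x_k(0)|\leq (\min_k m_k)^{-1/2}D(t)$; inserting this into the triangle inequality $r_{ij}(t) \leq |x_i(t)-x_i(0)| + r_{ij}(0) + |x_j(t)-x_j(0)|$ yields $R(t) \leq 2(\min_k m_k)^{-1/2}D(t) + R(0) = O(t)$. Thus $R(t)/t$ stays bounded, so $\gamma$ cannot fall into case $(1)$ of Theorem \ref{Classification of motions by Marchal}; it must therefore be of the form $\gamma(t)=at+O(t^{2/3})$, and in particular it is not superhyperbolic.

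The substantive point, and the step I would be most careful about, is the tension between the quadratic-in-displacement lower bound $D(t)^2/(2t)$ and the merely linear-in-displacement upper bound $\mu(D(t))$ on the action potential: it is precisely this mismatch of growth orders that caps the displacement at $O(t)$, and it hinges on the upper bound $\mu$ being \emph{uniform in $t$}, which is why Theorem \ref{uniform bound for phi_hn} (ultimately Maderna's estimate, Theorem \ref{bound for phi(x,y,t)}) is indispensable. By contrast, the passage from $D(t)$ to $R(t)$ is elementary and does not even require moving to the center-of-mass frame, since the initial spread $R(0)$ is finite and each body's displacement is controlled by $D(t)$.
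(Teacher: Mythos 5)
Your proof is correct and is essentially the paper's own argument: both rest on the minimizing property turning the action into $\phi_h(\gamma(0),\gamma(t))$, which is then trapped between the Cauchy--Schwarz lower bound $D(t)^2/(2t)$ and the uniform upper bound $\mu(D(t))$ of Theorem \ref{uniform bound for phi_hn}, the paper merely running the order-of-growth mismatch as a contradiction along a sequence $t_n$ with $R(t_n)/t_n\to+\infty$ instead of deriving $D(t)=O(t)$ directly. Your explicit triangle-inequality passage from $D(t)$ to $R(t)$ is a nice bonus, since it justifies concretely the step the paper compresses into the assertion $R(t)\thickapprox\|\gamma(t)\|$ without any center-of-mass normalization.
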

   
   \begin{proof}
      First we give a claim, that for any $x,y\in E^{N}, t>0$ and any $\gamma\in AC (x,y,t)$ defined over $[a,b]$
       with length $t$, we have
      \begin{equation*}
       \frac{\|x-y\|^2}{2t}\leq A(\gamma),
      \end{equation*}
      which is true since 
      \begin{equation*}
       \|x-y\|^2\leq\left(\int_{a}^{b}\|\dot{\gamma}\|dt\right)^2\leq t\int_{a}^{b}\|\dot{\gamma}\|^2dt,
      \end{equation*}
      then
      $$
      \frac{\|x-y\|^2}{2t}\leq\frac{1}{2}\int_{a}^{b}\|\dot{\gamma}\|^2dt \leq A(\gamma).
      $$
   
    Now suppose $\gamma$ is superhyperbolic and we take $t_{n}\rightarrow+\infty$ such that $R(t_n)/t_{n}\rightarrow+\infty$. Denote $\lambda_n = \|\gamma(t_{n})-\gamma(0)\|$,
      first notice that $R(t)\thickapprox\|\gamma(t)\|$ as $t\rightarrow+\infty$ implies $\lambda_n/t_{n}\rightarrow+\infty$.
      Then since $\gamma$ minimizes the action $ A_h $,  and $A(\gamma\vert_{[0,t_n]})+ht_n = A_h(\gamma\vert_{[0,t_n]}) =\phi_{h}(\gamma(0),\gamma(t_n))$,
      therefore by the previous assertion and Theorem \ref{uniform bound for phi_hn} we have the following inequality about $\phi_{h}(\gamma(0),\gamma(t_n))$ for any $n$:
      \begin{equation}
       \frac{\lambda_n^2}{2t_n}+ht_n\leq A(\gamma\vert_{[0,t_n]})+ht_n =\phi_{h}(\gamma(0),\gamma(t_n))\leq \left(\alpha\lambda_n + \beta\lambda_n^2 \right)^{1/2}
      \end{equation}
   for some constants $\alpha,\beta$, but this is incompatible for $\lambda_n\rightarrow+\infty$ as $t_n\rightarrow+\infty$.
   \end{proof}
   
   We have to explain here that Saari \cite{SAARI1973275} had found that there is no superhyperbolic motion in the three body problem.
   In detail, Saari gave important estimates that in three body problem, either $R(t)\approx  At$ or $ R(t)$ is bounded by $Ct^{2/3}$ as $t\rightarrow+\infty$, where $A$ and $C$ are constants.
   
   For the $N$-body problem with $N\geq 4$,  whether there is a superhyperbolic motion still needs a complete answer.
    Saari and Xia \cite{SAARI1989342} gave an example that for the collinear Newtonian four body 
   problem, the superhyperbolic motions exist in a weak sense, i.e., a motion with eslatic collision.
   For the  high-dimensional and collisionless case, there is no example at present.
   
   We have already known that geodesic rays are the second type, but the converse is not true, oscillatory motion is an example, 
   it is known that it exists in three body problem, 
   see Sitnikov \cite{Sitnikov1961TheEO}. According to Saari \cite{SAARI1973275},
   their mutual distances are bounded by $Ct^{2/3}$ for some constant $C$ hence it belongs to the second case of the above theorem,
   but its complex and weird behavior exclude the possibility of being a geodesic ray.

   \subsection{Parabolic motions starting from collison set}\label{Parabolic motions starting from collison set}
   In 2014, Luz and Maderna \cite{daluz_maderna_2014} proved the existence of parabolic solutions for any given initial position $x_0\in\Omega$, the case of starting from collision set was omitted in that paper, 
   physically speaking, when two bodies $x_i, x_j$ have the same initial position $x_i(0) = x_j(0)\in E$, their motions are ejections from their shared starting point.
   In this subsection we use the calibrating curves to give a brief proof for all $x_0\in E^N$.
   Before that we need a lemma that is needed not only in this subsection, but also in the proof of Theorem \ref{The set of geodesic rays is closed}.
   \begin{lem}\label{calibrating curve is free time minimizer}
       For $h\geq 0$, $w\in\mathcal{B}_h$ is dominated by $L+h$. For any function $u$ dominated by $L+h$, if $\gamma$  $h$-calibrates $u$, then $\gamma$ is a free time minimizer of the action potential $ A_h $.
   \end{lem}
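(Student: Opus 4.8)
The plan is to treat the two assertions separately: the first follows directly from the triangle inequality for the action potentials together with the convergence $\phi_{h_n}\to\phi_h$, while the second is the standard weak KAM mechanism by which calibration forces minimality. Throughout I would use that a continuous function $u$ is dominated by $L+h$ precisely when $u(\sigma(T))-u(\sigma(0))\leq A_h(\sigma)$ for every absolutely continuous curve $\sigma$ of arbitrary time length, equivalently when $u(y)-u(x)\leq \phi_h(x,y)$ for all $x,y\in E^N$.

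For the first claim, let $w\in\mathcal{B}_h$, so that $w(x)=\lim_{n}\bigl(\phi_{h_n}(0,p_n)-\phi_{h_n}(x,p_n)\bigr)$ for some $h_n\to h$ and $\|p_n\|\to+\infty$. For fixed $x,y\in E^N$ I would write
\[
w(y)-w(x)=\lim_{n}\bigl(\phi_{h_n}(x,p_n)-\phi_{h_n}(y,p_n)\bigr),
\]
and apply, at each index $n$, the triangle inequality $\phi_{h_n}(x,p_n)\leq\phi_{h_n}(x,y)+\phi_{h_n}(y,p_n)$ to obtain $\phi_{h_n}(x,p_n)-\phi_{h_n}(y,p_n)\leq\phi_{h_n}(x,y)$. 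Passing to the limit and using $\phi_{h_n}(x,y)\to\phi_h(x,y)$, the convergence of the conformal distances already established in the proof of Proposition \ref{Maderna: exist calibrating curve on [0,+infty)}, I conclude $w(y)-w(x)\leq\phi_h(x,y)\leq A_h(\gamma)$ for every $\gamma\in AC(x,y)$, which is exactly domination by $L+h$.

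For the second claim, suppose $u$ is dominated by $L+h$ and $\gamma$ $h$-calibrates $u$. I would fix an arbitrary compact subinterval $[a,b]$ and set $x=\gamma(a)$, $y=\gamma(b)$. Calibration gives the equality $A_h(\gamma\vert_{[a,b]})=u(y)-u(x)$, while domination gives, for every competitor $\sigma\in AC(x,y)$, the inequality $u(y)-u(x)\leq A_h(\sigma)$. Combining the two yields $A_h(\gamma\vert_{[a,b]})\leq A_h(\sigma)$ for all such $\sigma$, whence $A_h(\gamma\vert_{[a,b]})=\phi_h(x,y)$ and $\gamma\vert_{[a,b]}$ realizes the infimum in $AC(\gamma(a),\gamma(b))$. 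Since $[a,b]$ was arbitrary, $\gamma$ is a free time minimizer of $A_h$.

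I do not expect a serious obstacle here; the two points that require care are the legitimacy of passing from the pre-limit triangle inequality to the limit, which is handled by the established convergence $\phi_{h_n}\to\phi_h$, and the observation that domination is an inequality valid for free-time curves of \emph{any} length. The latter is what guarantees that the comparison in the second part genuinely ranges over all of $AC(\gamma(a),\gamma(b))$, rather than only over curves of fixed time $b-a$, so that minimality is obtained in the free-time sense required by the definition of a free time minimizer.
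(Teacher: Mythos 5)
Your proof is correct and takes essentially the same route as the paper's: the triangle inequality for the pre-limit potentials $\phi_{h_n}$ followed by passage to the limit gives the domination of $w\in\mathcal{B}_h$, and the sandwich $A_h\left(\gamma\big\vert_{[a,b]}\right)=u(\gamma(b))-u(\gamma(a))\leq\phi_h\left(\gamma(a),\gamma(b)\right)\leq A_h\left(\gamma\big\vert_{[a,b]}\right)$ yields free time minimality over all of $AC(\gamma(a),\gamma(b))$. If anything, you are slightly more careful than the paper in explicitly invoking the convergence $\phi_{h_n}\to\phi_h$ (established in the proof of Proposition \ref{Maderna: exist calibrating curve on [0,+infty)}) and in noting that domination over curves of arbitrary time length is what makes the comparison free-time rather than fixed-time.
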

   
   \begin{proof}
       Since $\phi_h(\cdot,\cdot)$ is a distance, for any $x,y$ in the domain of $w$, suppose 
       \begin{equation*}
        w_n = \phi(0,p_n) - \phi(x,p_n),\quad w = \lim_n w_n,
       \end{equation*}then
       \begin{equation}
           w_{p_n}(y) - w_{p_n}(x) = \phi_h(x,p_n) - \phi_h(y,p_n) \leq\phi_h\left(x, y\right),
       \end{equation}
       and since $w\in\mathcal{B}_{h}$, let $n\rightarrow+\infty$ to have $w(y) - w(x)\leq\phi_h\left(x, y\right)$.
       
       The second claim arise from $ A_h (\gamma\big\vert_{[a,b]})=u(\gamma(b))-u(\gamma(a))\leq\phi_{h}(\gamma(b),\gamma(a))\leq  A_h (\gamma\big\vert_{[a,b]})$.
   \end{proof}
   
   Now we formally give the statement of the result and its proof.
   \begin{thm}
       Given arbitrary $x_0\in E^N$, there exists a parabolic solution of Newtonian $N$-body problem having $x_0$ as its initial position.
   \end{thm}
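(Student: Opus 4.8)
The plan is to realize the desired orbit as a free time minimizer of the zero-energy action, emanating from $x_0$, which is produced by a calibrating curve of a suitable Busemann function, and then to read off its parabolic character from the energy integral. Fix the energy level $h=0$ throughout, since a completely parabolic motion has vanishing limiting velocity and hence zero total energy.

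First I would manufacture a Busemann function $u\in\mathcal{B}_0$. Choose any sequence $p_n\in E^N$ with $\|p_n\|\to+\infty$ and set $v_n(x)=\phi_0(0,p_n)-\phi_0(x,p_n)$. Because $\phi_0$ is a distance, the triangle inequality gives $|v_n(x)-v_n(y)|\le\phi_0(x,y)$, and Theorem \ref{uniform bound for phi_hn}, applied with the constant sequence $h_n\equiv 0$, bounds the right-hand side by $\mu(\|x-y\|)$; together with the normalization $v_n(0)=0$ this makes $\{v_n\}$ equicontinuous and pointwise bounded on compact sets. By Ascoli's theorem a subsequence converges locally uniformly to some $u\in C^0(E^N)$, which is exactly of the form required in Proposition \ref{Maderna: exist calibrating curve on [0,+infty)} with $h_n\equiv 0$. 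Applying that proposition with initial configuration $x_0$ produces a curve $\gamma:[0,+\infty)\to E^N$ with $\gamma(0)=x_0$ that $0$-calibrates $u$. By Lemma \ref{calibrating curve is free time minimizer}, $u$ is dominated by $L$, so $\gamma$ is a free time minimizer of $A_0$, and by Theorem \ref{ free time minimizer equivalent to geodesic ray} it is, after reparameterization, a geodesic ray of $j_0$.

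It remains to verify that $\gamma$ is completely parabolic. Theorem \ref{free time minmizer with h geq 0, all r_ij tend to infinity} shows $\gamma$ is expansive, so every mutual distance $r_{ij}(t)\to+\infty$ and therefore $U(\gamma(t))\to 0$. Proposition \ref{h-minimizer not superhyperbolic} excludes the superhyperbolic alternative, so the Marchal--Saari dichotomy (Theorem \ref{Classification of motions by Marchal}) forces $\gamma(t)=at+O(t^{2/3})$ for some $a\in E^N$, whence $\dot\gamma(t)\to a$. The conserved energy then reads $\tfrac12\|\dot\gamma\|^2=U(\gamma)+h=U(\gamma)\to 0$, so $a=\lim_{t\to+\infty}\dot\gamma(t)=0$. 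Thus $\gamma(t)=O(t^{2/3})$ with $\dot\gamma_i(t)\to 0$ for every $i$, while expansiveness drives each body to infinity; by definition $\gamma$ is a completely parabolic motion starting at $x_0$.

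The routine ingredients, namely the equicontinuity estimate and the Ascoli extraction, are inexpensive once Theorem \ref{uniform bound for phi_hn} is available. The genuine difficulty is packaged into the two black boxes invoked above: the construction of a curve that calibrates $u$ on all of $[0,+\infty)$ in Proposition \ref{Maderna: exist calibrating curve on [0,+infty)}, whose heart is the completeness argument ruling out a finite-time singularity, and the expansiveness of Theorem \ref{free time minmizer with h geq 0, all r_ij tend to infinity}. The one step requiring care in our own argument is the identification $a=0$, which must be obtained from the energy relation together with $U(\gamma(t))\to 0$ rather than assumed; one should also note that ``each body tends to infinity'' is extracted from expansiveness (equivalently, from $\dot\gamma\to 0$ and $r(t)\to+\infty$), since $x_0$ is allowed to lie in the collision set $\Delta$ and the statement must also cover ejections from collision.
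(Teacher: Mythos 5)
Your proposal is correct and follows essentially the same route as the paper's own proof: obtain a $0$-calibrating curve from $x_0$ via Proposition \ref{Maderna: exist calibrating curve on [0,+infty)}, upgrade it to a free time minimizer by Lemma \ref{calibrating curve is free time minimizer}, rule out superhyperbolicity (Proposition \ref{h-minimizer not superhyperbolic}) to get $\gamma(t)=at+O(t^{2/3})$ from Theorem \ref{Classification of motions by Marchal}, and then use expansiveness (Theorem \ref{free time minmizer with h geq 0, all r_ij tend to infinity}) plus conservation of energy to force $a=0$. The only difference is that you explicitly carry out the Ascoli construction of the Busemann function $u\in\mathcal{B}_0$, a detail the paper defers to Remark \ref{Busemann functions exist} and the proof of Theorem \ref{The set of geodesic rays is closed}, which is a reasonable bit of added rigor rather than a different method.
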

   \begin{proof}
       According to Proposition \ref{Maderna: exist calibrating curve on [0,+infty)}, there is a calibrating curve $x(t)$ starting from $x_0\in E^N$ with energy $h=0$, 
       by Lemma \ref{calibrating curve is free time minimizer}, it is a free time minimizer, and by Proposition \ref{h-minimizer not superhyperbolic}, it is not superhyperbolic,
        hence by Theorem \ref{Classification of motions by Marchal}, we have
       $x(t) = at + O(t^{2/3})$ for some $a\in E^N$.  On the other hand Theorem \ref{free time minmizer with h geq 0, all r_ij tend to infinity} states that all mutual distances of a free time minimizer 
       goes to infinity, hence 
       $\lim_{t\rightarrow+\infty} \frac{1}{2}\|\dot{x}(t)\|^2 - U\big(x(t)\big) = \frac{1}{2}\|a\|^2$.
       On the other hand, by the conservation of total energy,
       $\frac{1}{2}\|\dot{x}(t)\|^2 - U\big(x(t)\big) \equiv 0$,
        hence $\frac{1}{2}\|a\|^2 = 0$, which imlplies $a = 0$ and then $x(t)$ is parabolic.
   \end{proof}
   
   \begin{rem}
       If the parabolic orbit $\gamma$ in the proof of the above proposition has an asymptotic configuration or limit shape, it is determined by the function $u$ which is 
       calibrated by $\gamma$.
   \end{rem}
   
   A free time minimizer with zero total energy is a parabolic motion, and conversely it is true in some sense. 
   Moeckel and Montgomery \cite{Moeckel2018} have found out in the three body problem,
   that every parabolic solution which is asymptotic to a Lagrange configuration is eventually geodesic.
   They also found that in some range of mass ratios, all geodesic rays is asymptotic to Lagrange's equilateral configuration, which is the absolute minimum of $\tilde{U}$.
   But the case of Eulerian collinear configuration is unclear.
   The above arguement combined with Maderna's main theorem in  \cite{Maderna2009} left us a question.
     
   \begin{question}
       Is a parabolic motion necessarily a geodesic ray?
   \end{question}

   \section{Proof of Theorem \ref{The set of geodesic rays is closed}}
   We give the proof of our main result in this section, before that, we need an vital technics in
   the following lemma.

\begin{lem}\label{p_n goes to infty}
    For sequence $a_n \in E^N$, suppose $\left\|a_n\right\|$ has a limit $\lambda \geq 0$, 
    there is a time sequence $t_{n}$ that goes to infinity such that the $\|\cdot\|$ norm  of
\begin{equation*}
    p_{n}:=a_{n} t_n+O\left(t_n^{2 / 3}\right),
\end{equation*}
goes to infinity. Here $O\left(t^{2 / 3}\right)$ means $O\left(t^{2 / 3}\right) \approx t^{2 / 3}$ as $t \rightarrow+\infty$.

   \end{lem}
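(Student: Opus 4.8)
The plan is to split on whether the common limit $\lambda$ of the sequence $\|a_n\|$ is positive or zero, because the two regimes are controlled by different terms of $p_n = a_n t_n + e_n$. Here $e_n$ denotes the order-$t_n^{2/3}$ error, which by hypothesis is comparable to $t_n^{2/3}$: there are constants $0<c\le C$ with $c\,t_n^{2/3}\le\|e_n\|\le C\,t_n^{2/3}$ once $t_n$ is large. The only freedom is the choice of the scales $t_n$, and in each case I bound $\|p_n\|$ below by the reverse triangle inequality $\|p_n\|\ge\bigl|\,\|a_n\|\,t_n-\|e_n\|\,\bigr|$.

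First suppose $\lambda>0$. Then $\|a_n\|\ge\lambda/2$ for all large $n$, so the linear part dominates. Taking for instance $t_n=n$, the reverse triangle inequality gives
\[
  \|p_n\|\ \ge\ \|a_n\|\,t_n-\|e_n\|\ \ge\ \tfrac{\lambda}{2}\,t_n-C\,t_n^{2/3},
\]
whose right-hand side tends to $+\infty$; thus in this case any $t_n\to+\infty$ works.

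The delicate case is $\lambda=0$, where $\|a_n\|\to 0$ and the linear term can no longer be relied upon, so the error $e_n$ of size $\approx t_n^{2/3}$ must be what drives $\|p_n\|\to+\infty$. The idea is to let $t_n\to+\infty$ slowly enough that $a_n t_n$ cannot cancel $e_n$. Concretely I set $t_n=\min\bigl(n,\ (c/2)^3\|a_n\|^{-3}\bigr)$, reading the second entry as $+\infty$ when $a_n=0$. Since $\|a_n\|\to 0$ forces $\|a_n\|^{-3}\to+\infty$, both entries diverge and hence $t_n\to+\infty$; moreover $\|a_n\|\,t_n^{1/3}\le c/2$ by construction, so
\[
  \|p_n\|\ \ge\ \|e_n\|-\|a_n\|\,t_n\ \ge\ c\,t_n^{2/3}-\|a_n\|\,t_n\ =\ t_n^{2/3}\bigl(c-\|a_n\|\,t_n^{1/3}\bigr)\ \ge\ \tfrac{c}{2}\,t_n^{2/3}\ \to\ +\infty.
\]

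I expect the main obstacle to be exactly this calibration of $t_n$ in the parabolic regime $\lambda=0$: one must push $t_n$ to infinity fast enough that $t_n^{2/3}\to+\infty$, yet keep it below the threshold $\|a_n\|^{-3}$ so the shrinking linear term stays negligible against the error. The compatibility of these two demands rests solely on $\|a_n\|\to 0$, which makes the threshold itself diverge and leaves room for $t_n\to+\infty$. A technical point worth recording is that the comparability constants $c,C$ must be uniform in $n$; this holds for the families of geodesic rays to which the lemma is applied, since Theorem \ref{free time minmizer with h geq 0, all r_ij tend to infinity} forces each such curve's growth to be comparable to $t^{2/3}$ (when $a=0$) or to $t$.
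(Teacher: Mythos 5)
Your proof is correct and follows essentially the same route as the paper's: split on $\lambda>0$ versus $\lambda=0$ and calibrate $t_n$ against $\|a_n\|$ so that, by the reverse triangle inequality, the dominant term ($a_n t_n$ when $\lambda>0$, the $t_n^{2/3}$ error when $\lambda=0$) forces $\|p_n\|\to+\infty$; the paper takes $t_n=\|a_n\|^{-1}$ in the $\lambda=0$ case where you take $t_n=\min\bigl(n,(c/2)^3\|a_n\|^{-3}\bigr)$. Your single formula is in fact slightly tidier, since it uniformly covers sequences in which zero and nonzero $a_n$ interleave --- a configuration the paper's two $\lambda=0$ sub-cases, read literally, do not exhaust --- and your caveat about the uniformity in $n$ of the comparability constants $c,C$ makes explicit an assumption the paper's proof uses tacitly.
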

\begin{proof}
    \begin{itemize}
        \item  If $\lambda=0$ and there is a $N^*$ such that $\left\{a_n\right\}$ vanishes for $n>N^*$, thus for any positive sequence $\left\{t_k\right\}$ that goes to infinity, we have $\left\|p_{n_k}\right\| \rightarrow+\infty$.
        \item  If $\lambda=0$ and there is a $N^*$ such that $\left\|a_n\right\|>0$ for all $n>N$, we take $t_n=\left\|a_n\right\|^{-1}$, which also makes $\left\|p_n\right\|$ and $t_n$ go to infinity as $t \rightarrow+\infty$.
        \item  If $\lambda>0$, then there exist a positive integer $N^*$ and a positive real number $L$ such that
         $\left\|a_n\right\|>L$ for all $n>N^*$, hence for any positive sequence $t_n$ that goes to infinity, $\left\|p_n\right\|=$ $\|a_n t_n+O(t_n^{2 / 3})\| \geq L t_n+O(t_n^{2 / 3}) \rightarrow+\infty$.
    \end{itemize}
\end{proof}

Now we can conduct the proof of the main Theorem \ref{The set of geodesic rays is closed}
\begin{proof}
    Recall that we have denoted by $GR\subset TE^N$ the set of initial data that lead to geodesic rays.
    Take an arbitrary sequence $\{(x_0^{(n)},v_0^{(n)})\}\subset GR$ satiesfying
     $x_{0}^{(n)}\rightarrow x_{0}\in E^N, v_{0}^{(n)}\rightarrow v_{0}\in E^{N}$ in the norm $\|\cdot\|$  as $n\rightarrow\infty$.
    We prove the motion $\gamma$ determined by $x_{0}$ and $v_{0}$ is also a geodesic ray, then any initial condition in $TE^N\setminus GR$ must have an open 
    neighberhood in $TE^N$ such that it contains no point of $GR$, and hence $TE^N\setminus GR$ is open and $GR$ is closed.  

    Based on Theorem \ref{exist of geodesic motions},
There is a sequence of free time minimizers $\gamma_{n}(t)=a_{n}t+O(t^{2/3})$ defined in $[0,+\infty)$ with intial data
$\gamma_{n}(0)=x_{0}^{(n)}\in E^N, \dot{\gamma}_{n}(0)=v_{0}^{(n)}\in E^{N}$, here $a_{n}\in E^{N}$.
 Theorem \ref{free time minmizer with h geq 0, all r_ij tend to infinity} implies  $O(t^{2/3})\approx t^{2/3}$.
  The energy $h_n$ of $\gamma_{n}$ is a constant regardless of time,
$h_{n}=\frac{1}{2}\|\dot{\gamma}_{n}(t)\|^{2}-U(\gamma(t))=\frac{1}{2}\|v_{0}^{(n)}\|^{2}-U(x_{0}^{(n)})=\frac{1}{2}\|a_n\|^2 \geq 0$,
 therefore we suppose $h_{n}\rightarrow h\geq 0, \|a_n\|\rightarrow\lambda\geq 0$ as $x_0^{(n)}, v_0^{(n)}$ approaching their limits.

Let
\begin{equation*}
    p_{n}=\gamma_{n}(t_{n})=a_{n}t_{n}+O(t_{n}^{2/3}),
\end{equation*}
By Lemma \ref{p_n goes to infty} we can assume a sequence $t_{n}$ tending to infinity such that $\left\|p_{n}\right\| \rightarrow+\infty$. 
Define $u_n(x)= \phi_{h_n}(0,p_n) - \phi_{h_n}(x,p_n)$, and by Theorem \ref{uniform bound for phi_hn}, we have
\begin{equation}\label{u_p is dominated by phi_h}
    u_n(x)-u_n(y)=\phi_{h_n}(y,p_n)-\phi_{h_n}(x,p_n)\leq\phi_{h_n}(x,y)\leq \mu(\|x-y\|) ,
\end{equation}
similarly $u_n(y)-u_n(x) \leq\phi_{h_n}(x,y)\leq \mu(\|x-y\|)  $, thus we have the inequality

\begin{equation*}
    \vert u_n(y)-u_n(x)\vert  \leq\phi_{h_n}(x,y)\leq \mu(\|x-y\|) ,
\end{equation*}
which eventually means $\{u_n\}$ is equicontinuous and uniformly bounded in any compact set of $E^{N}$ since
$\vert u_n(x)\vert \leq\left(A\|x\|+B\|x\|^2\right)^{1/2}\leq M$, $M$ is a bound depending on the compact set.
 Therefore according to Ascoli's Theorem, we can take a subsequence of $u_n$ which is also denoted by $u_n$ for convenience to define a function on $E^{N}$:
\begin{equation}\label{def of u}
    u(x)=\lim_{n\rightarrow+\infty}u_n(x),
\end{equation}
we therefore verified that $\mathcal{B}_h$ is non empty.
$u(x)$ is continuous since 
\begin{equation*}
    \vert u(x)-u(y)\vert  = \lim_n \left( u_n(x)-u_n(y) \right) \leq \mu (\|x-y\|).
\end{equation*}
Because  $\gamma_{n}$ is a free time minimizer of $ A_{h_n} $, so for arbitrary $t\geq 0$ and $t_n>t$, we have
\begin{align}\label{gamma_n calibrate over [t,t_n]}
    u_n(\gamma_{n}(t_{n})) - u_n(\gamma_{n}(t)) & = u_n(p_{n}) - u_n(\gamma_{n}(t))
    =\phi_{h_{n}}\left(\gamma_{n}(t),p_{n}\right)\\ &= A_{h_n} \left(\gamma_{n}\big\vert_{[t,t_{n}]}\right).
\end{align}
Therefore $\gamma_{n}$ $h_n$-calibrates $u_n$, 
since for any $t,t',t_n$ such that $t_{n}>t'>t\geq 0$, 
\begin{align*}
    u_n(\gamma_{n}(t'))-u_n(\gamma_{n}(t)) =& \big(u_n(\gamma_{n}(t_n))-u_n(\gamma_{n}(t))\big)\\
       & - \big( u_n(\gamma_{n}(t_n))-u_n(\gamma_{n}(t')) \big)  \\
     =& A_{h_n} \left(\gamma_{n}\big\vert_{[t,t_n]}\right) - A_{h_n} \left(\gamma_{n}\big\vert_{[t',t_n]}\right)\\
     =& A_{h_n} \left(\gamma_{n}\big\vert_{[t,t']}\right).
\end{align*}
For any $t>0$ we have 
\begin{equation}\label{u_pn is a calibrating curve of A_L+hn}
    u_n(\gamma_{n}(t))-u_n(x_{0}^{(n)}) = A_{h_n} \left(\gamma_{n}\big\vert_{[0,t]}\right).
\end{equation}

Let $\gamma:[0,t^*)\rightarrow\Omega$ be a solution with the maximal $t^*$ such that $\gamma(0)=x_{0}, \dot{\gamma}(0)=v_{0}$, 
$\gamma$ has energy constant
$$
\frac{1}{2}\|v_0\|^2-U\big((x_0)\big)=\lim_{n}\bigl(\frac{1}{2}\|v_{0}^{(n)}\|^{2}-U(x_{0}^{(n)})\bigr)=\lim_{n}h_n=h.
$$
Then we try to verify that $t^*=+\infty$  and $\gamma$  $h-$calibrates $u$, thus by Lemma \ref{calibrating curve is free time minimizer},
$\gamma$ is an free time minimizer hence a geodesic ray, and in particular by Lemma \ref{h-minimizer not superhyperbolic}, it is not superhyperbolic.

We now prove that $\gamma$ $h$-calibrates $u$. By the continuity with respect to the initial conditions, 
we have $\gamma_{n}\rightarrow\gamma$ and $\dot{\gamma}_{n}\rightarrow\dot{\gamma}$ uniformly over any compact subset of $[0,t^*)$, then
for every $t\in[0,t^*)$, let $n\rightarrow+\infty$, we have   
\begin{align*}
    \lim_{n\rightarrow+\infty} A_{h_n} \left(\gamma_{n}\vert_{[0,t]}\right) =&\lim_n \left[A_{h_n}(\gamma_n \vert_{[0,t]}) - A_h(\gamma \vert_{[0,t]})\right] + A_h(\gamma \vert_{[0,t]})\\
      = &\lim_n \left[A_{h_n}(\gamma_n \vert_{[0,t]}) - A_h(\gamma_n \vert_{[0,t]}) + A_h(\gamma_n \vert_{[0,t]}) - A_h(\gamma \vert_{[0,t]})\right]  \\
     &+ A_h(\gamma \vert_{[0,t]})\\
     =& \lim_n (h_n - h)t + \lim_n \left[A_h(\gamma_n \vert_{[0,t]}) - A_h(\gamma \vert_{[0,t]})\right] + A_h(\gamma \vert_{[0,t]})\\
      =& A_h(\gamma \vert_{[0,t]})
\end{align*}
On the other hand, the sequence $h_{n}$ is bounded since it has a limit $h\geq 0$, hence according to the inequality \ref{u_p is dominated by phi_h},
when $n\rightarrow+\infty$, we have 
\begin{equation*}
    \vert u_n(x_{0}^{(n)})-u_{p_{n}}(x_{0})\vert  \leq \phi_{h_n}(x_{0}^{(n)},x_{0}) 
    \leq\mu(\|x_{0}^{(n)}-x_{0}\|)\rightarrow 0, 
\end{equation*}
and thus
\begin{equation}
    \lim_{n\rightarrow\infty}u_n(x_{0}^{(n)})= \lim_n\left(u_n(x_{0}^{(n)})-u_n(x_{0}) + u_n(x_{0}) \right)=
     u(x_{0}),
\end{equation}
similarly
\begin{equation}
    \lim_n u_n(\gamma_{n}(t))=u(\gamma(t)).
\end{equation}
Let $n$ in the equality \ref{u_pn is a calibrating curve of A_L+hn} goes to infinity, 
 we have 
\begin{equation}
 u(\gamma(t))-u(x_{0})= A_h \left(\gamma\big\vert_{[0,t]}\right)
\end{equation}
for $t\in[0,t^*)$,
hence $\gamma$ is an $h$-calibrating curve of $u$ in its domain.

If $t^*<+\infty$, we assume $t_{n}>t^*$ for any $n$, 
Since $u\in\mathcal{B}_h$, we apply Maderna's Theorem \ref{Maderna: exist calibrating curve on [0,+infty)}, take $0<t'<t^*$, there is an $h$-calibrating curve 
$\rho:[0,+\infty)\rightarrow E^{N} $ of $u$ such that $\rho(0)=\gamma(t')$. The concatenation of $\gamma\big\vert_{[0,t']}$ with $\rho$ is also
an $h$-calibrating curve, we denote the concatenation as $\tilde{\gamma}$.
By Lemma \ref{calibrating curve is free time minimizer}, $\tilde{\gamma}$ is a global free time minimizer.
We notice that  Theorem \ref{Marchal: minimizer has no collison} implies that the concatenation $\tilde{\gamma}$ defined over $[0,+\infty)$ is the true motion and $\dot{\gamma}(t')=\dot{\rho}(0)$.
This is a contradiction since $t^*$ is finite.
Hence $\gamma$ is defined in $[0,+\infty)$ and calibrates $u$. 

On the otherhand,
for $u_n(x)-u_n(y)\leq\phi_{h_{n}}(x,y)$ from the inequality \ref{u_p is dominated by phi_h}, we let $n\rightarrow+\infty$ to have 
\begin{equation}\label{u is dominated by phi_h}
   u(x)-u(y)\leq\phi_{h}(x,y),
\end{equation}
thus by Lemma \ref{calibrating curve is free time minimizer} it is
a free time minimizer, and by Theorem \ref{ free time minimizer equivalent to geodesic ray} it is a geodesic ray. 

\end{proof}

In the above proof, the sequence $\{\|a_n\|\}$ has a limit, hence $\{a_n\}$ is bounded.
We conjecture that limit shape of $\gamma$ in the above proof is an accumulation point of the set $\{a_n\}$ in $E^N$. Therefore we have another question.

\begin{question}
    Determine the limit shape of $\gamma$, or give its location range.
\end{question}

\section{Acknowledgement}
This work is partially supported by National Natural Science Foundation of China (Grant No.12071316).

\bibliographystyle{plain}
\bibliography{ref}

\begin{thebibliography}{10}

\bibitem{10.2307/23234173}
A.~Albouy and V.~Kaloshin.
\newblock Finiteness of central configurations of five bodies in the plane.
\newblock {\em Annals of Mathematics}, 176(1):535--588, 2012.

\bibitem{burgos2022existence}
J.~Burgos.
\newblock Existence of partially hyperbolic motions in the {N}-body problem.
\newblock {\em Proceedings of the American Mathematical Society},
  150(04):1729--1733, 2022.

\bibitem{Burgos2022}
J.~Burgos and E.~Maderna.
\newblock Geodesic rays of the {N}-body problem.
\newblock {\em Archive for Rational Mechanics and Analysis}, 243(2):807--827,
  Feb 2022.

\bibitem{ASENS_1922_3_39__29_0}
Jean Chazy.
\newblock Sur l'allure du mouvement dans le probl\`eme des trois corps quand le
  temps cro{\^\i}t ind\'efiniment.
\newblock {\em Annales scientifiques de l'\'Ecole Normale Sup\'erieure}, 3e
  s{\'e}rie, 39:29--130, 1922.

\bibitem{zbMATH01789893}
A.~Chenciner.
\newblock Action minimizing solutions of the {Newtonian} {{\(n\)}}-body
  problem: {From} homology to symmetry.
\newblock In {\em Proceedings of the international congress of mathematicians,
  ICM 2002, Beijing, China, August 20--28, 2002. Vol. III: Invited lectures},
  pages 279--294, Beijing, 2002. Beijing: Higher Education Press.

\bibitem{clarke2013functional}
F.~Clarke.
\newblock {\em Functional Analysis, Calculus of Variations and Optimal
  Control}.
\newblock Graduate Texts in Mathematics. Springer London, London, 1 edition,
  2013.

\bibitem{daluz_maderna_2014}
A.~D.~Luz and E.~Maderna.
\newblock On the free time minimizers of the {N}ewtonian {N}-body problem.
\newblock {\em Mathematical Proceedings of the Cambridge Philosophical
  Society}, 156(2):209--227, 2014.

\bibitem{Duignan2020}
N.~Duignan, R.~Moeckel, R.~Montgomery, and G.~Yu.
\newblock Chazy-type asymptotics and hyperbolic scattering for the $n$-body
  problem.
\newblock {\em Archive for Rational Mechanics and Analysis}, 238(1):255--297,
  Oct 2020.

\bibitem{Hampton2011}
M.~Hampton and A.~Jensen.
\newblock Finiteness of spatial central configurations in the five-body
  problem.
\newblock {\em Celestial Mechanics and Dynamical Astronomy}, 109(4):321--332,
  Apr 2011.

\bibitem{Hampton2006}
M.~Hampton and R.~Moeckel.
\newblock Finiteness of relative equilibria of the four-body problem.
\newblock {\em Inventiones mathematicae}, 163(2):289--312, Feb 2006.

\bibitem{Hu2021}
X.~Hu, Y.~Ou, and G.~Yu.
\newblock An index theory for collision, parabolic and hyperbolic solutions of
  the {Newtonian} $n$-body problem.
\newblock {\em Archive for Rational Mechanics and Analysis}, 240(1):565--603,
  Apr 2021.

\bibitem{maderna_2012}
E.~Maderna.
\newblock On weak {KAM} theory for {N}-body problems.
\newblock {\em Ergodic Theory and Dynamical Systems}, 32(3):1019--1041, 2012.

\bibitem{Maderna2009}
E.~Maderna and A.~Venturelli.
\newblock Globally minimizing parabolic motions in the {Newtonian} {N}-body
  problem.
\newblock {\em Archive for Rational Mechanics and Analysis}, 194(1):283--313,
  Oct 2009.

\bibitem{10.4007/annals.2020.192.2.5}
E.~Maderna and A.~Venturelli.
\newblock {Viscosity solutions and hyperbolic motions: a new PDE method for the
  $N$-body problem}.
\newblock {\em Annals of Mathematics}, 192(2):499 -- 550, 2020.

\bibitem{Marchal2002}
C.~Marchal.
\newblock How the method of minimization of action avoids singularities.
\newblock {\em Celestial Mechanics and Dynamical Astronomy}, 83(1):325--353,
  May 2002.

\bibitem{MARCHAL1976150}
C.~Marchal and D.~G. Saari.
\newblock On the final evolution of the n-body problem.
\newblock {\em Journal of Differential Equations}, 20(1):150--186, 1976.

\bibitem{zbMATH04009869}
R.~McGehee.
\newblock Von {Zeipel}'s theorem on singularities in celestial mechanics.
\newblock {\em Expo. Math.}, 4:335--345, 1986.

\bibitem{Moeckel2018}
R.~Moeckel, R.~Montgomery, and H.~S.~Morgado.
\newblock Free time minimizers for the three-body problem.
\newblock {\em Celestial Mechanics and Dynamical Astronomy}, 130(3):28, Mar
  2018.

\bibitem{Percino2014}
Boris Percino and H{\'e}ctor S{\'a}nchez-Morgado.
\newblock Busemann functions for the n-body problem.
\newblock {\em Archive for Rational Mechanics and Analysis}, 213(3):981--991,
  Sep 2014.

\bibitem{zbMATH02677123}
H.~Poincar{\'e}.
\newblock Sur les solutions p{\'e}riodiques et le principe de la moindre
  action.
\newblock {\em C. R. Acad. Sci., Paris}, 123:915--918, 1896.

\bibitem{10.2307/24901766}
H.~Pollard.
\newblock The behavior of gravitational systems.
\newblock {\em Journal of Mathematics and Mechanics}, 17(6):601--611, 1967.

\bibitem{10.2307/1995752}
D.~G. Saari.
\newblock Improbability of collisions in {Newtonian} gravitational systems.
\newblock {\em Transactions of the American Mathematical Society},
  162:267--271, 1971.

\bibitem{SAARI1973275}
D.~G. Saari.
\newblock On oscillatory motion in gravitational systems.
\newblock {\em Journal of Differential Equations}, 14(2):275--292, 1973.

\bibitem{SAARI197780}
D.~G. Saari.
\newblock A global existence theorem for the four-body problem of {Newtonian}
  mechanics.
\newblock {\em Journal of Differential Equations}, 26(1):80--111, 1977.

\bibitem{SAARI1984300}
D.~G. Saari.
\newblock The manifold structure for collision and for hyperbolic-parabolic
  orbits in the n-body problem.
\newblock {\em Journal of Differential Equations}, 55(3):300--329, 1984.

\bibitem{SAARI198127}
D.~G. Saari and N.~D. Hulkower.
\newblock On the manifolds of total collapse orbits and of completely parabolic
  orbits for the n-body problem.
\newblock {\em Journal of Differential Equations}, 41(1):27--43, 1981.

\bibitem{SAARI1989342}
D.~G Saari and Z.~Xia.
\newblock The existence of oscillatory and superhyperbolic motion in
  {Newtonian} systems.
\newblock {\em Journal of Differential Equations}, 82(2):342--355, 1989.

\bibitem{10.2307/1995609}
Donald~G. Saari.
\newblock Expanding gravitational systems.
\newblock {\em Transactions of the American Mathematical Society},
  156:219--240, 1971.

\bibitem{Sitnikov1961TheEO}
K.~A. Sitnikov.
\newblock The existence of oscillatory motions in the three-body problem.
\newblock {\em Soviet physics. Doklady}, 5:647, 1961.

\bibitem{von1908singularites}
H.~von Zeipel.
\newblock Sur les singularit{\'e}s du probleme des n corps.
\newblock {\em Ark. Math. Astr. Fys.}, (4):1--4, 1908.

\bibitem{Yu2018}
X.~Yu and S.~Zhang.
\newblock Action-minimizing solutions of the one-dimensional {N}-body problem.
\newblock {\em Celestial Mechanics and Dynamical Astronomy}, 130(37), 2018.

\bibitem{Zhang2012}
S.~Zhang.
\newblock Variational minimizing parabolic and hyperbolic orbits for the
  restricted 3-body problems.
\newblock {\em Science China Mathematics}, 55(4):721--725, Apr 2012.

\end{thebibliography}

\end{document}